\numberwithin{equation}{section}
\theoremstyle{plain}
    \newtheorem{thm}{Theorem}[section]
    \newtheorem{lem}[thm]{Lemma}
    \newtheorem{prop}[thm]{Proposition}
\theoremstyle{definition}    
    \newtheorem{exmp}[thm]{Example}
\begin{document}
\title{Explicit logarithmic formulas of special values of hypergeometric functions ${}_3F_2$}
\author{Masanori Asakura and Toshifumi Yabu}
\address{Department of Mathematics, Hokkaido University, Sapporo, 060-0810 Japan}
\email{asakura@math.sci.hokudai.ac.jp}
\address{Department of Mathematics, Hokkaido University, Sapporo, 060-0810 Japan}
\email{}
\subjclass[2000]{14D07, 19F27, 33C20 (primary), 11G15, 14K22 (secondary)}
\keywords{Periods, Regulators, Complex multiplication, Hypergeometric functions}

\maketitle

\begin{abstract}
In the paper \cite{aot-1},
we proved that the value of ${}_3F_2\left({a,b,q\atop a+b,q};1\right)$ of the
generalized hypergeometric function is a $\overline{\mathbb Q}$-linear combination of log of
algebraic numbers if rational numbers $a,b,q$ satisfy a certain condition.
In this paper, we present a method to obtain an explicit description of it.
\end{abstract}

\def\ch{{\mathrm{ch}}}
\def\Coker{\mathrm{Coker}}
\def\crys{\mathrm{crys}}
\def\dlog{d{\mathrm{log}}}
\def\dR{{\mathrm{d\hspace{-0.2pt}R}}}            
\def\et{{\mathrm{\acute{e}t}}}  
\def\Frac{{\mathrm{Frac}}}
\def\id{{\mathrm{id}}}              
\def\Image{{\mathrm{Im}}}        
\def\Hom{{\mathrm{Hom}}}  
\def\Ext{{\mathrm{Ext}}}
\def\MHS{{\mathrm{MHS}}}  
  
\def\ker{{\mathrm{Ker}}}          
\def\Pic{{\mathrm{Pic}}}
\def\CH{{\mathrm{CH}}}
\def\NS{{\mathrm{NS}}}
\def\NF{{\mathrm{NF}}}
\def\End{{\mathrm{End}}}
\def\pr{{\mathrm{pr}}}
\def\Proj{{\mathrm{Proj}}}
\def\ord{{\mathrm{ord}}}
\def\qis{{\mathrm{qis}}}
\def\reg{{\mathrm{reg}}}          %
\def\res{{\mathrm{res}}}          %
\def\Res{\mathrm{Res}}
\def\Spec{{\mathrm{Spec}}}     
\def\can{{\mathrm{can}}}
\def\cont{{\mathrm{cont}}}
\def\zar{{\mathrm{zar}}}
\def\Tr{{\mathrm{Tr}}}
\def\tr{{\mathrm{tr}}}
\def\bA{{\mathbb A}}
\def\bC{{\mathbb C}}
\def\C{{\mathbb C}}
\def\G{{\mathbb G}}
\def\bE{{\mathbb E}}
\def\bF{{\mathbb F}}
\def\F{{\mathbb F}}
\def\bG{{\mathbb G}}
\def\bH{{\mathbb H}}
\def\bJ{{\mathbb J}}
\def\bL{{\mathbb L}}
\def\cL{{\mathscr L}}
\def\bN{{\mathbb N}}
\def\bP{{\mathbb P}}
\def\P{{\mathbb P}}
\def\bQ{{\mathbb Q}}
\def\Q{{\mathbb Q}}
\def\bR{{\mathbb R}}
\def\R{{\mathbb R}}
\def\bZ{{\mathbb Z}}
\def\Z{{\mathbb Z}}
\def\cA{{\mathscr A}}
\def\cH{{\mathscr H}}
\def\cM{{\mathscr M}}
\def\cD{{\mathscr D}}
\def\cE{{\mathscr E}}
\def\cO{{\mathscr O}}
\def\O{{\mathscr O}}
\def\cR{{\mathscr R}}
\def\cS{{\mathscr S}}
\def\cX{{\mathscr X}}
%
\def\ep{\epsilon}
\def\vG{\varGamma}
\def\vg{\varGamma}
%
%
%
%
\def\lra{\longrightarrow}
\def\lla{\longleftarrow}
\def\Lra{\Longrightarrow}
\def\hra{\hookrightarrow}
\def\lmt{\longmapsto}
\def\ot{\otimes}
\def\op{\oplus}
\def\wt#1{\widetilde{#1}}
\def\wh#1{\widehat{#1}}
\def\spt{\sptilde}
\def\ol#1{\overline{#1}}
\def\ul#1{\underline{#1}}
\def\us#1#2{\underset{#1}{#2}}
\def\os#1#2{\overset{#1}{#2}}

\def\Aut{\mathrm{Aut}}
\def\rank{\mathrm{rank}}
\def\fib{\mathrm{fib}}
\def\gen{\mathrm{gen}}
\def\Del{\mathrm{Del}}


\section{Introduction}\label{intro-sect}
The (generalized) hypergeometric function is defined to be the complex analytic function
\[
{}_{p+1}F_p\left({a_1,\ldots,a_{p+1}\atop b_1,\ldots,b_p};x\right)=\sum_{n=0}^\infty
\frac{(a_1)_n\cdots(a_{p+1})_n}{(b_1)_n\cdots(b_p)_n}\frac{x^n}{n!}
\]
where $(\alpha)_n=\alpha\cdot(\alpha+1)\cdots(\alpha+n-1)$ denotes 
the Pochhammer symbol. We refer to the books \cite{Bailey}, \cite{bateman} or \cite{slater}
for the general theory of hypergeometric functions.
The most classical case is the case $p=1$, which is often called the
Gauss hypergeometric function.
A number of formulas on the hypergeometric functions are known.
For example,  Gauss proved that the value of ${}_2F_1$ at $x=1$ is given by
the product of Gamma values (e.g. \cite{Bailey} 1.3)
\[
{}_2F_1\left({a,b\atop c};1\right)=\frac{\Gamma(c)\Gamma(c-a-b)}{\Gamma(c-a)\Gamma(c-b)}
,\quad\mathrm{Re}(c-a-b)>0.\]
One also finds a number of generalizations for ${}_{p+1}F_p$ in \cite{NIST} 16.4.
In the paper \cite{aot-1},
we provided a new formula on the value of ${}_3F_2$ at $x=1$.
\begin{thm}[Log formula, \cite{aot-1}] \label{log-formula}
For $x\in\R$, let $\{x\}:=x-\lfloor x\rfloor$ denote the decimal part.
Let $a,b,q\in\Q$ be non-integers such that
none of $q-a,q-b,q-a-b$ is an integer.
Assume that
\begin{equation}\label{main-cond}
\{sq\}+\{s(-q+a)\}+\{s(-q+b)\}+\{s(q-a-b)\}=2
\end{equation}
holds for all $s\in \Z$ prime to the denominators of $a,b,q$.
Then
\begin{equation}\label{main-result}
B(a,b){}_3F_2\left(
\begin{matrix}
a,b,q\\
a+b,q+1\end{matrix};1
\right)
\in\ol{\Q}+\ol{\Q}\log\ol{\Q}^\times.
\end{equation}
Here $B(a,b)=\Gamma(a)\Gamma(b)/\Gamma(a+b)$ is the beta function, and
the right hand side denotes the $\ol\Q$-linear subspace of $\C$ generated by
$1$, $2\pi i$ and 
$\log \alpha$'s, $\alpha\in\ol{\Q}^\times$.
\end{thm}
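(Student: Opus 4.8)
The plan is to realize the left side of \eqref{main-result} as a period of an explicit algebraic surface and to deduce its arithmetic nature from a Hodge-theoretic analysis, with \eqref{main-cond} entering exactly as the condition that the relevant Hodge pieces be algebraic. First I would turn the ${}_3F_2$ into an iterated integral. Since $(q)_n/(q+1)_n=q/(q+n)=q\int_0^1 t^{q+n-1}\,dt$, summing term by term and invoking the Euler integral representation of the Gauss function (valid for $\mathrm{Re}\,a,\mathrm{Re}\,b,\mathrm{Re}\,q>0$ and extended to all admissible parameters by analytic continuation) gives
\[
B(a,b)\,{}_3F_2\!\left({a,b,q\atop a+b,q+1};1\right)
= q\int_0^1\!\!\int_0^1 t^{q-1}u^{b-1}(1-u)^{a-1}(1-tu)^{-a}\,dt\,du,
\]
a convergent period of the $2$-form $\omega=q\,t^{q-1}u^{b-1}(1-u)^{a-1}(1-tu)^{-a}\,dt\wedge du$ on the square $[0,1]^2$. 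Writing $N$ for a common denominator of $a,b,q$ and passing to the cyclic covers in the $u$- and $t$-directions that clear the fractional exponents, this becomes the pairing of an algebraic de Rham class against a topological chain on a smooth projective surface $S$, namely the total space of a fibration of cyclic covers $y^N=u^{c_1}(1-u)^{c_2}(1-tu)^{c_3}$ of the $u$-line over the $t$-line. The class of $\omega$ lands in a single eigenspace of the $\mu_N\times\mu_N$-action indexed by an integer $s$ prime to $N$.

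Next I would analyze $H^2(S)$ as a representation of $\mu_N\times\mu_N$. Its eigenspace decomposition $H^2(S)\otimes\ol{\Q}=\bigoplus_s V_s$ has the feature that the Hodge type of each $V_s$ is read off from the fractional parts $\{sq\}$, $\{s(-q+a)\}$, $\{s(-q+b)\}$, $\{s(q-a-b)\}$: their sum equaling $2$ is precisely the condition that $V_s$ be of pure Hodge type $(1,1)$, i.e. $h^{2,0}(V_s)=0$. Hence \eqref{main-cond}, imposed for \emph{every} $s$ prime to the denominators, forces the entire transcendental part of the submotive carrying $\omega$ to be spanned by Hodge classes. Combined with the complex-multiplication structure supplied by the $\mu_N$-action, which makes the pure constituents of CM type, these Hodge classes are algebraic over $\ol{\Q}$ and the pure constituents are Tate motives $\Q(0)$ and $\Q(1)$.

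Finally, the degenerate relation $a+b$ in the lower slot of the inner ${}_2F_1(a,b;a+b;t)$ produces unipotent monodromy and a $\log(1-t)$-singularity as $t\to1$; motivically this says that $\omega$ is not a pure class but defines an extension of $\Q(0)$ by $\Q(1)$ inside $H^2(S)$, equivalently a motivic-cohomology class of $K_1$-type, and the double integral above computes its regulator. Because the pure sub- and quotient motives are Tate by the previous step, this extension is of mixed Tate type, so its regulator is a $\ol{\Q}$-combination of $2\pi i$ and of $\log\alpha$ for explicit $\alpha\in\ol{\Q}^\times$; thus the value lies in $\ol{\Q}+\ol{\Q}\log\ol{\Q}^\times$. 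Analytically, these logarithms are exactly the shadow of the $\log(1-t)$-term, while the genuinely transcendental period contributions are what \eqref{main-cond} eliminates.

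\emph{The main obstacle} is the middle step. Showing that \eqref{main-cond} is equivalent to the vanishing of all the $(2,0)$-parts is a finite combinatorial check, but upgrading ``$h^{2,0}(V_s)=0$'' to the genuine algebraicity of the Hodge classes and to the assertion that the constituents are honestly Tate motives over $\ol{\Q}$ is where the real work lies; this is not formal Hodge bookkeeping but requires the explicit complex-multiplication structure of the Fermat-type covers (so that the Hodge--Tate classes are accounted for by algebraic cycles and the relevant Galois/Mumford--Tate groups are controlled). A secondary technical point is matching the analytic $\log(1-t)$-expansion of the degenerate Gauss function with the extension class and its regulator, so that the constant $\alpha\in\ol{\Q}^\times$ is pinned down rather than merely shown to exist; this matching is also what will make the resulting formula \emph{explicit}, which is the stated goal of the present paper.
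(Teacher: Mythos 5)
Your proposal is correct in outline and follows essentially the same route as the paper's proof sketched in \S\ref{sketch-sect}: your double integral is the regulator pairing \eqref{sketch-eq1} on the Gauss-type hypergeometric fibration of Example \ref{HG-exmp-gauss} (the cyclic cover in the $t$-direction being the base change $t\mapsto t^l$ that produces $X_l$), condition \eqref{main-cond} is exactly the criterion that the relevant eigenspace of $W_2H^1(S_l,R^1f_{l*}\Q)$ be Tate of type $(1,1)$, and the value is then the Beilinson regulator of a motivic cohomology class supported on the totally degenerate fiber at $t^l=1$. The one point you overstate is the ``main obstacle'': since the ambient variety is a surface, the $(1,1)$-classes are divisor classes by the Lefschetz $(1,1)$ theorem and descend to $\ol\Q$ by a standard spreading argument, so no Mumford--Tate or CM-motive input is needed for the qualitative statement \eqref{main-result} --- exhibiting those divisors explicitly is only required for the \emph{explicit} log formula, which is precisely the difficulty this paper addresses in \S\ref{ELF-sect}.
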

There remains a question to obtain an explicit description of \eqref{main-result}
(which we call {\it explicit log formula}),
and it has not been completed except some cases.

\medskip

The purpose of this paper is to present a general method for the explicit log formula.
The key ingredient is the {\it Beilinson regulator} and the {\it hypergeometric fibration} introduced by
Otsubo and the first author in \cite{a-o-2}.
For example, we discuss
the fibration $f_l:X_l\to \P^1$
whose general fiber $f_l^{-1}(t)$ is the curve
\[
y^N=x^A(1-x)^B(1-t^lx)^{N-B}
\]
where $N,A,B,l$ are positive integers such that $0<A,B<N$.
Though most part of our method follows the argument in \cite{aot-1},
we need to employ a new technique developed in \cite{a-real} (see also \cite{a-o-1} Appendix), 
namely constructing a certain ``rational differential 2-form'', which
we denote by $\omega_\Del$ (see \S \ref{del-sect} for definition).
There still remains a difficulty to work out the explicit log formula. 
We need to know generators of the Neron-Severi group of $X_l$ explicitly
(see \S \ref{ELF-sect} for detail). This is done in some cases, while it seems very hard in many other cases.

\medskip

This paper is organized as follows.
In \S \ref{gen-sect} we give a general method for explicit log formulas.
The main theorem is Theorem \ref{ex-thm-1}.
In \S \ref{rec-sect}, we demonstrate how to apply Theorem \ref{ex-thm-1} 
and how to obtain explicit log formulas
in the case $(a,b,q)=(\frac{1}{6},\frac{5}{6},\frac{1}{2})$.
We also give explicit log formulas (without proof) in the cases
$(a,b,q)=(\frac{1}{6},\frac{5}{6},\frac{i}{3})$,$(\frac{1}{6},\frac{5}{6},\frac{j}{4})$ and
$(\frac{1}{6},\frac{5}{6},\frac{k}{5})$ with $i\in\{1,2\}$, $j\in\{1,2,3\}$ and $k\in\{1,\ldots,4\}$.

\medskip

Finally we note that Terasoma recently developed 
a different method from ours, and
obtained explicit log formulas in many cases \cite{terasoma}.
For example, the cases $(a,b)=(\frac{1}{6},\frac{5}{6})$ and $q=\frac{1}{2},\frac{i}{3},\frac{j}{4}$
are covered by his.
On the other hand, the case $(a,b,q)=(\frac{1}{6},\frac{5}{6},\frac{k}{5})$ is not covered,
both methods have own advantages.

There remains the question on explicit description 
of the {\it functional log formula} proved in \cite{a-o-log}.
We expect that our method of hypergeometric fibration shall also work.

\noindent{\bf Acknowledgement.}
The authors are grateful to the referee for reading the manuscript carefully 
and pointing out lots of errors.

\section{Sketch of Proof of Log Formula \cite{aot-1}}\label{sketch-sect}
In the paper \cite{aot-1},
we gave two proofs of the log formula (Theorem \ref{log-formula}).
One uses the hypergeometric fibrations and the other does the Fermat surfaces.
The crucial point is to relate the special values of ${}_3F_2$ to
the Beilinson regulator of certain elements of motivic cohomology $H^3_\cM(X,\Z(2))$.
In this section we review the former proof using hypergeometric fibrations.
The explicit log formula shall be obtained by improving it.

\medskip

Throughout this paper, we fix an embedding $\ol\Q\hra\C$.

\subsection{Hypergeometric Fibrations}\label{HG-sect}
We recall the hypergeometric fibrations introduced in \cite{a-o-2} \S 3.1. 
Let $R$ be a finite-dimensional semisimple $\Q$-algebra.
Let $e:R\to E$ be a projection onto a number field $E$.
Let $X$ be a smooth projective variety over $k_\dR$, and $f:X\to \P^1$
a surjective map endowed with a multiplication on $R^1f_*\Q|_U$ by $R$
where $U\subset \P^1$ is the maximal Zariski open set such that $f$ is smooth over $U$. 
We say $f$ is a {\it hypergeometric fibration with multiplication by $(R,e)$} (abbreviated HG
fibration) if
the following conditions hold.
We fix an inhomogeneous coordinate $t\in\P^1$.
\begin{enumerate}
\item[\bf (a)]
$f$ is smooth over $\P^1\setminus\{t=0,1,\infty\}$,
\item[\bf (b)]
$\dim_E (R^1f_*\Q)(e)=2$ where we write $V(e):=E\ot_{e,R}V$ the $e$-part,
\item[\bf (c)]
Let $\Pic_f^0\to \P^1\setminus\{0,1,\infty\}$ be the Picard fibration whose general fiber
is the Picard variety $\Pic^0(f^{-1}(t))$,
and let $\Pic_f^0(e)$ be the component associated to the $e$-part $(R^1f_*\Q)(e)$
(this is well-defined up to isogeny).
Then $\Pic_f^0(e)\to\P^1\setminus\{0,1,\infty\}$ has 
totally degenerate semistable reduction at $t=1$.
\end{enumerate}
The last condition {\bf (c)} is equivalent to saying that
the local monodromy $T$ on $(R^1f_*\Q)(e)$ at $t=1$ is unipotent
and the rank of log monodromy $N:=\log(T)$ is maximal, namely 
$\rank(N)=\frac{1}{2}\dim_\Q (R^1f_*\Q)(e)$
($=[E:\Q]$ by the condition {\bf(b)}).
\begin{exmp}\label{HG-exmp}
Let $f:X\to \P^1$ be an elliptic fibration.
Then $f$ is a HG fibration with multiplication by $(\Q,\mathrm{id})$
if and only if $f$ is smooth over $\P^1\setminus\{0,1,\infty\}$
and the reduction at $t=1$ is multiplicative (i.e. of type $I_n$, $n>0$).
\end{exmp}
\begin{exmp}[\cite{a-o-2} \S 3.2]\label{HG-exmp-gauss}
Let $N,A,B$ be integers such that $0<A,B<N$ and $\gcd(A,N)=\gcd(B,N)=1$.
Let $f:X\to \P^1$ be a fibration whose general fiber $X_t=f^{-1}(t)$ is the projective
nonsingular model of an affine curve 
\[
y^N=x^A(1-x)^B(1-tx)^{N-B}.
\]
Then $f$ is smooth over $\P^1\setminus\{t=0,1,\infty\}$.
Let $\mu_N$ be the group of $N$-th roots of unity.
For $\zeta_N\in\mu_N$,
the automorphism given by $(x,y,t)\mapsto(x,\zeta_Ny,t)$ gives rise to the multiplication
by the group ring $R=\Q[\mu_N]$.
Let $e:R\to E$ be a projection onto a number field $E$.
If $E\ne\Q$, then $(R,e)$ satisfies the conditions
{\bf(b), (c)}.
We call $f$ the {\it HG fibration of Gauss type}.
\end{exmp}
\subsection{Motivic cohomology and Deligne-Beilinson cohomology}
The theory of the motivic cohomology groups
\[
H^i_\cM(X,\Z(j))
\] 
of a variety $X$ over a field is developed by Suslin, Voevodsky et al.
We here review $H^3_\cM(X,\Z(2))$, which has
an elementary description in the following way.
Let $X$ be a smooth quasi-projective variety over a field $k$.
We denote by $K_2^M$ the Milnor $K$-theory.
Then the {\it motivic cohomology group} $H^3_\cM(X,\Z(2))$
can be identified with the cohomology at the middle term of of the following complex
\begin{equation}\label{delta}
K_2^M(\ol\Q(X))\os{\delta_2}{\lra} \bigoplus_Dk(D)^\times 
\os{\delta_1}{\lra} \bigoplus_E\Z
\end{equation}
at the middle term, where
$D$ and $E$ run over all integral closed subschemes on $X$ of codimension $1$ and $2$ respectively, and $\delta_i$ are given as follows
\[
\delta_2\{f,g\}=\sum_D(-1)^{v_D(f)v_D(g)}\frac{f^{v_D(g)}}{g^{v_D(f)}}|_D,\quad
\delta_1\left(\sum_D(f,D)\right)=\sum_D\mathrm{div}_D(f).
\]
Here
$(f,D)$ denotes an element $f\in k(D)^\times\subset \op_Dk(D)^\times$
placed in the $D$-component.
Thus any element of $H^3_\cM(X,\Z(2))$ is represented by an element
$\sum_D(f,D)$ satisfying $\sum_D\mathrm{div}_D(f)=0$. 
Note that the Chow group $\CH^2(X)$ is defined to be the cokernel  of $\delta_1$.
For a closed subscheme $Z\subset X$ of codimension $1$, 
the motivic cohomology  $H^3_{\cM,Z}(X,\Z(2))$
supported on $Z$ is canonically isomorphic to the kernel of
\[
\bigoplus_{D\subset Z}k(D)^\times 
\os{\delta_1}{\lra} \bigoplus_{E\subset Z}\Z.
\]
Hence there is an exact sequence
\[
H^3_{\cM,Z}(X,\Z(2))\to H^3_\cM(X,\Z(2))\to
H^3_\cM(X\setminus Z,\Z(2)).
\]
Let $X$ be a projective smooth variety over $\C$, and $Z\subset X$ a closed subscheme.
The {\it Deligne-Beilinson cohomology} group
$H^\bullet_{\cD,Z}(X,\Z(r))$ is defined to be the cohomology
${\mathbb H}^\bullet_Z(X^{an},\Z(r)_\cD)$ of the complex
\[
\Z(r)_\cD:\Z(r)\to \O_X\to\Omega^1_X\to\cdots\to\Omega^{r-1}_X
\]
of sheaves on the analytic site $X^{an}$ (e.g. \cite{ev}). 
Write $H^\bullet_{\cD}(X,\Z(r)):=H^\bullet_{\cD,X}(X,\Z(r))$.
If the base field is $\ol\Q$,
we simply write $H^\bullet_{\cD,Z}(X,\Z(r))=H^\bullet_{\cD,Z\times_{\ol\Q}\C}(X\times_{\ol\Q}\C,\Z(r))$
(note that we fix an embedding $\ol\Q\hra \C$ throughout the paper). 
There is the {\it Beilinson regulator map} (or higher Chern class map)
\begin{equation}\label{regulator-H32}
\xymatrix{
\reg:H^i_{\cM,Z}(X,\Z(r))\ar[r]&
H^i_{\cD,Z}(X,\Z(r)).
}
\end{equation}
We refer to \cite{schneider} for the definition of regulator maps.
We shall discuss the case $(i,r)=(3,2)$ in detail in \S \ref{bei-sect}.
There is the exact sequence
\[
0\to H^2_B(X,\C)/F^2H^2_B(X,\C)+ H^2_B(X,\Z(2))
\to
H^3_\cD(X,\Z(2))
\os{i}{\to} H^3_B(X,\Z(2))_{\mathrm{tor}}
\to 0
\]
where $F^\bullet$ denotes the Hodge filtration.
Write $H^3_\cD(X,\Z(2))':=\ker(i)$. One has
\begin{align}
H^3_\cD(X,\Z(2))'
&\cong H^2_B(X,\C)/F^2H^2_B(X,\C)+ H^2_B(X,\Z(2))\label{DB-isom-1}\\
&\cong\Hom_\C(F^{d-1}H^{2d-2}_B(X),\C)/\Image H_{2d-2}^B(X,\Z(2-d))\label{DB-isom-2}
\end{align}
where $d=\dim X$.

\subsection{Sketch of Proof of Log Formula}
Let $f:X\to \P^1$ be a HG fibration over $\ol\Q$ with multiplication by $(R,e)$. 
Suppose that $\dim X=2$ and there is a section $\P^1\to X$
(e.g. HG fibrations of Gauss type, Example \ref{HG-exmp-gauss}). 
Consider a Cartesian square
\[
\xymatrix{
X_l\ar[rd]_{f_l}\ar[r]^i&X^\prime_l\ar[r]\ar[d]
\ar@{}[rd]|{\square}&X\ar[d]^f\\
&\P^1\ar[r]^{t\to t^l}&\P^1
}
\]
where $i$ is a desingularization.
Let $S:=\P^1\setminus\{t=0,1,\infty\}$, $S_l:=\P^1\setminus\{t^l=0,1,\infty\}$ and
$U_l:=f^{-1}_l(S_l)\subset X_l$ be the complement of singular fibers.
Let $Z:=\cup_i f_l^{-1}(\zeta_l^i)$ be the inverse image of $f^{-1}(1)$.
Note that the local monodromy $T$ at $t=\zeta_l$
on the $e$-part $(R^1f_{l*}\Q)(e)
:=E\ot_{e,R}R^1f_{l*}\Q$ 
is unipotent and $\log(T)$ has the maximal rank by the condition {\bf(c)}.
As is shown in \cite{a-o-2} Proposition 4.8, 
one can construct non-trivial elements
\[
\xi\in H^3_{\cM}(X_l,\Z(2)).
\]
which lie in the image of $H^3_{\cM,Z}(X_l,\Z(2))$.
Suppose $\reg(\xi)\in H^3_{\cD}(X,\Z(2))'$. Then 
\[
\reg(\xi)\in H^2_B(X_l,\C)/F^2H^2(X_l)+H^2_B(X_l,\Z(2))
\]
by the isomorphism \eqref{DB-isom-1}. 
By the natural map $H^2(X_l)\to H^2(U_l)$
we have
\[
\reg(\xi)|_{U_l}\in 
W_2H^2_B(U_l,\C)/\Image F^2H^2(X_l)+H^2_B(X_l,\Z(2))
\]
where $W_\bullet$ denotes the weight filtration.
There is an exact sequence
\[
0\lra H^1(S_l,R^1f_{l*}\Z)\lra H^2(U_l,\Z)\lra H^2(f_l^{-1}(t),\Z)\lra 0
\]
which splits (up to torsion) by a section $\P^1\to X_l$.
Hence we have
\[
\ol\reg(\xi)\in 
(F^1W_2H^1(S_l,R^1f_{l*}\C))^\lor/\Image H_2^B(X_l,\Z)
\]
by \eqref{DB-isom-2}.
Recall that the sheaf $R^1f_{*}\Q$ is endowed with multiplication by
$R$.
For $\zeta_l\in\mu_l$, let $[\zeta_l]$ be the automorphism of $U_l$ given by
$t\to \zeta_lt$. 
Let $\pi:S_l\to S$ be the cyclic covering.
The sheaf $\pi_*R^1f_{l*}\Q=\pi_*\pi^*R^1f_{*}\Q=R^1f_{*}\Q\ot \pi_*\Q$
is endowed with multiplication by the group ring $R[\mu_l]$ in a natural way, and hence 
so is $H^1(S_l,R^1f_{l*}\Q)=H^1(S,\pi_*R^1f_{l*}\Q)$.
Let $\chi:R[\mu_l]\to \ol\Q$ be a homomorphism.
Under a mild assumption, one can show that 
\[
W_2H^1(S,R^1f_{l*}\C)(\chi):=H^1(S,R^1f_{l*}\Q)\ot_{R[\mu_l],\chi}\ol\Q
\]
is one-dimensional (see \cite{a-o-2} \S 4.3 for detail).
Let $\omega_\chi
\in W_2F^1H^1_\dR(S_l,R^1f_{l*}\Omega^\bullet_{U_l/S_l})(\chi)$
be a $\ol\Q$-basis.
The main result of \cite{a-o-2} is the regulator formula 
\begin{equation}\label{sketch-eq1}
\langle\ol\reg(\xi),\omega_\chi\rangle=A_\chi+A'_\chi\cdot B(a_\chi,b_\chi){}_3F_2\left({a_\chi,b_\chi,q_\chi\atop a_\chi+b_\chi,q_\chi+1};1\right)\mod \Image H_2^B(X_l,\Z)
\end{equation}
with some $A_\chi,A'_\chi\in \ol\Q$, $A'_\chi\ne0$, where $a_\chi,b_\chi,q_\chi$ are certain rational numbers 
defined from the monodromy action on $R^1f_{*}\Q$
(see \cite{a-o-2} Theorem 4.7 or \cite{aot-1} Theorem 3.1 for the detail).
On the other hand,
it follows from the theory of Beilinson regulator that
\begin{equation}\label{sketch-eq2}
\langle\ol\reg(\xi),\omega_\chi\rangle\in \log\ol\Q^\times
\end{equation}
if $W_2H^1(S,R^1f_{l*}\Q)(e)$ is a Tate Hodge structure of type $(1,1)$, 
or equivalently the triplet
$(a_\chi,b_\chi,q_\chi)$ satisfy the condition \eqref{main-cond} (\cite{aot-1} Propositions
3.2, 3.3).
In this case, the periods (i.e. the image of $H_2^B(X_l,\Z(2))$) are contained in $2\pi i\ol\Q$.
Thus \eqref{sketch-eq1} and \eqref{sketch-eq2} imply
the log formula \eqref{main-result}.

\section{Explicit Log formula}\label{gen-sect}
To obtain the explicit log formula, we need to compute
\eqref{sketch-eq1} and \eqref{sketch-eq2} explicitly.
One can compute \eqref{sketch-eq2} in terms of elements of the motivic cohomology
(if one knows the generators of the Neron-Severi group $\NS(X_l)$).
On the other hand, to compute the RHS of \eqref{sketch-eq1}, we need to
make ``$A_\chi,A'_\chi$'' clear.  
This is done by constructing a nice rational 2-form 
``$[\omega_\chi]_\Del$''
which shall be given in \eqref{del-prop-1-eq0}. This is the technical heart of this paper.
\subsection{Relative de Rham cohomology}\label{relative-sect}
For a smooth manifold $M$, we denote by 
$\cA^q(M)$ the complex of spaces of smooth differential $q$-forms on $M$
with coefficients in $\C$.

\medskip

Let $X$ be a quasi-projective smooth variety over $\C$.
The de Rham cohomology $H^q_\dR(X)$ is defined to be the cohomology
of the complex $\cA^\bullet(X)$
\[
H^q_\dR(X)=H^q(\cA^\bullet(X)).
\]
By Grothendieck's comparison theorem, one may replace
$\cA^\bullet(X)$ with the algebraic de Rham complex,
\[
H^q(\cA^\bullet(X))\cong H^q_\zar(X,\Omega^\bullet_X).
\]
The right hand side is often referred as algebraic de Rham cohomology groups
(and the left hand side as analytic de Rham cohomology).
In this paper we identify the both sides, and simply call the de Rham cohomology.

In more general, the relative de Rham cohomology groups $H^q_\dR(X_\bullet,Y_\bullet)$ for 
an embedding $Y_\bullet
\hra X_\bullet$ of simplicial schemes are defined (e.g. \cite{hodge III} 8.3.8).
We here review the definition of $H^2_\dR(V,D)$ in case that $V$ is a quasi-projective smooth
surface over $\C$ and $D\subset V$ a reduced curve (i.e. a reduced 
closed subscheme of codimension one). 
Let $\rho:\wt{D}\to D$ be the normalization and $\Sigma\subset D$
the set of singular points. Let $s:\wt{\Sigma}:=\rho^{-1}(\Sigma)
\hra\wt{D}$ be the inclusion.
There is an exact sequence
\[
0\lra \cO_{D}\os{\rho^*}{\lra} \cO_{\wt{D}}\os{s^*}{\lra} 
\C_{\wt{\Sigma}}/\C_\Sigma\lra 0
\]
where $\C_{\wt{\Sigma}}=\mathrm{Maps}(\wt{\Sigma},\C)=\Hom(\Z\wt{\Sigma},\C)$, 
$\rho^*$ and $s^*$ are the pull-back.
We define $\cA^\bullet(D)$ to be the mapping fiber of 
$s^*:\cA^\bullet(\wt{D})\to \C_{\wt{\Sigma}}/\C_\Sigma$:
\[
\cA^0(\wt{D})
\os{s^*\op d}{\lra} \C_{\wt{\Sigma}}/\C_\Sigma\op \cA^1(\wt{D})
\os{0\op d}{\lra} \cA^2(\wt{D})
\]
where the first term is placed in degree 0.
Then
\[
H^q_\dR(D)=H^q(\cA^\bullet(D))
\]
is the de Rham cohomology of $D$, which fits into the exact sequence
\[
\cdots\lra H^0_\dR(\wt{D})\lra \C_{\wt{\Sigma}}/\C_\Sigma
\lra H^1_\dR(D)\lra H^1_\dR(\wt{D})\lra\cdots.
\]
There is a natural pairing 
\begin{equation}\label{pairingD}
H_1(D,\Z)\otimes H^1_\dR(D)\lra \C,
\quad
\gamma\ot z\mapsto\int_\gamma z:=\int_\gamma\eta-c(\partial(\rho^{-1}\gamma))
\end{equation}
where $z=(c,\eta)\in \C_{\wt{\Sigma}}/\C_\Sigma\op\cA^1(\wt{D})$ with $d\eta=0$
and $\partial:H_1(\wt{D},\wt\Sigma)\to H_0(\wt\Sigma)=\Z\wt\Sigma$ denotes the boundary map (note that $c(\partial(\rho^{-1}\gamma)=0$ if $c\in \C_\Sigma$).

\medskip

We define $\cA^\bullet(V,D)$
to be the mapping fiber of 
$j^*:\cA^\bullet(V)\to \cA^\bullet(D)$ the pull-back by $j:D\hra V$:
\[
\cA^0(V)\os{\cD_0}{\lra} \cA^0(\wt{D})\op \cA^1(V)
\os{\cD_1}{\lra} \C_{\wt{\Sigma}}/\C_\Sigma\op\cA^1(\wt{D})\op \cA^2(V)\os{\cD_2}{\lra}\cdots
\]
where 
\[
\cD_0=(j\rho)^*\op d,\quad
\cD_1=\begin{pmatrix}
-(s^*\op d)&0\op(j\rho)^*\\
&d
\end{pmatrix},\quad
\cD_2=\begin{pmatrix}
-(0\op d)&(j\rho)^*\\
&d
\end{pmatrix},\ldots
\]
Then 
\begin{equation}\label{exp-9-0}
H^q_\dR(V,D)=H^q(\cA^\bullet(V,D))
\end{equation}
is the de Rham cohomology which
fits into the exact sequence
\begin{equation}\label{exp-9}
\cdots\lra H^{q-1}_\dR(D)\lra H^q_\dR(V,D)\lra H^q_\dR(V)\lra H^q_\dR(D)\lra\cdots. 
\end{equation}
An arbitrary element of $H^2_\dR(V,D)$ is represented by 
\begin{equation}\label{element}
(c,\eta,\omega)
\in \C_{\wt{\Sigma}}/\C_\Sigma\op\cA^1(\wt{D})\op \cA^2(V)
\end{equation}
which satisfies $j^*\omega=d\eta$ and $d\omega=0$.
They are subject to
relations $(s^*f, df,0)\sim0$ and $(0,j^*\theta,d\theta)\sim0$ for $f\in \cA^0(\wt{D}_0)$
and $\theta\in \cA^1(V)$.
The natural pairing
\begin{equation}\label{pairingVD1}
H_2(V,D;\Z)\ot H_\dR^2(V,D)\lra \C,\quad \Gamma\ot z
\longmapsto \int_\Gamma z
\end{equation}
is given by
\begin{equation}\label{pairingVD2}
\int_\Gamma z:=\int_\Gamma \omega-\int_{\partial\Gamma}(c,\eta)
=\int_\Gamma \omega-\int_{\partial\Gamma}\eta+c(\rho^{-1}(\partial\Gamma)).
\end{equation}
\subsection{The Beilinson regulator map by 1-extensions of mixed Hodge structures}
\label{bei-sect}
Let $X$ be a smooth quasi-projective variety over $\C$.
Let
\[
\reg:H^3_\cM(X,\Z(2))\lra H^3_\cD(X,\Z(2))
\]
be the Beilinson regulator map to the Deligne-Beilinson cohomology group (\cite{schneider}).
We here describe it in terms of 1-extensions of mixed Hodge structures 
(abbreviated to MHS's).
For simplicity we assume that $X$ is a projective smooth surface.
Let $Z\subset X$ be a curve.
There is also the regulator map $\reg_Z$ 
on $H^3_{\cM,Z}(X,\Z(2))$ which fits into a commutative 
diagram
\[
\xymatrix{
H^3_{\cM,Z}(X,\Z(2))\ar[d]\ar[r]^{\reg_Z}& H^3_{\cD,Z}(X,\Z(2))\ar[d]\\
H^3_{\cM}(X,\Z(2))\ar[r]^{\reg}& H^3_{\cD}(X,\Z(2)).
}
\]
Let $\Ext^1(\Z,-)$ denote the group of $1$-extensions of MHS's.
There is a commutative diagram
\[
\xymatrix{
0\ar[r]&\Ext^1(\Z,H_2(Z,\Z))\ar[d]\ar[r]& H^3_{\cD,Z}(X,\Z(2))\ar[r]^{\can}\ar[d]& 
H_1(Z,\Z)\cap H^{0,0}\ar[r]\ar[d]^i&0\\
0\ar[r]&\Ext^1(\Z,H_2(X,\Z))\ar[r]& H^3_\cD(X,\Z(2))\ar[r]& 
H_1(X,\Z)_{\mathrm{tor}}\ar[r]&0
}
\]
with exact rows where $H^{p,q}\subset H(X,\C)$ denotes the Hodge $(p,q)$-component.
We call the composition $c:=\can\circ\reg_Z$ the {\it cycle map}.
The above diagram gives rise to a map
\[
\Phi:\ker(i)\lra \Ext^1(\Z,H_2(X,\Z)/H_2(Z)).
\]
This is explicitly described in the following way.
Let
\[
0\lra H_2(X,\Z)/H_2(Z)\lra H_2(X,Z;\Z)\os{\partial}{\lra} H_1(Z,\Z)
\]
be the exact sequence of homology.
Then, for $\gamma\in H_1(Z,\Z)\cap H^{0,0}$ such that $\gamma\in\ker(i)$
($\Leftrightarrow$ $\gamma\in \Image\partial$), 
$\Phi(\gamma)$ is the 1-extension corresponding to 
\begin{equation}\label{bei-1}
0\lra H_2(X,\Z)/H_2(Z)\lra \partial^{-1}(\Z \gamma)\lra \Z\lra 0.
\end{equation}
Summing up the above we have the following proposition.
\begin{prop}
Write the composition
\[
\ker[H^3_{\cM}(X,\Z(2))\to H_1(X,\Z)_{\mathrm{tor}}]\os{\reg}{\to}
\Ext^1(\Z,H_2(X,\Z))\to  
\Ext^1(\Z,H_2(X,\Z)/H_2(Z))
\]
by $\ol\reg$.
Let $\xi\in H^3_{\cM,Z}(X,\Z(2))$ and  suppose that the homology cycle $\gamma_\xi:=c(\xi)
\in H_1(Z,\Z)$ lies in the image of $\partial$.
Then
$\ol{\reg}(\xi)$ is the 1-extension \eqref{bei-1} for $\gamma=\gamma_\xi$.
\end{prop}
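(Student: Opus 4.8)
The plan is to prove the proposition by a diagram chase in the two commutative diagrams relating $\reg_Z$ and $\reg$, using the regulator class $\reg_Z(\xi)$ itself as the distinguished lift in the construction of $\Phi$. The key observation is that both $\ol\reg(\xi)$ and $\Phi(\gamma_\xi)$ are produced by the same recipe: take an element of $H^3_{\cD,Z}(X,\Z(2))$ whose image under $\can$ is $\gamma_\xi$, push it into $H^3_\cD(X,\Z(2))$ by the forget-support map, land in $\Ext^1(\Z,H_2(X,\Z))$, and project to $\Ext^1(\Z,H_2(X,\Z)/H_2(Z))$. Since $\reg_Z(\xi)$ is by definition such a lift, the two outputs must agree, and the explicit form \eqref{bei-1} then follows from the description of $\Phi$ already recorded.

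First I would record that, by commutativity of the first diagram, $\reg(\xi)\in H^3_\cD(X,\Z(2))$ is the image of $\reg_Z(\xi)$ under forget-support, and that $\can(\reg_Z(\xi))=c(\xi)=\gamma_\xi$ by definition of the cycle map. Next, the hypothesis $\gamma_\xi\in\Image\partial$ is equivalent to $\gamma_\xi\in\ker(i)$; this equivalence is the statement that $\ker[H_1(Z,\Z)\to H_1(X,\Z)]=\Image\partial$ from the relative homology sequence, combined with the Hodge-theoretic fact that a weight-$0$ class $\gamma_\xi\in H^{0,0}$ can map into the pure weight-$(-1)$ group $H_1(X,\Z)$ only into its torsion. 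Hence the second commutative diagram gives $\can(\reg(\xi))=i(\gamma_\xi)=0$, which both places $\reg(\xi)$ in the subgroup $\Ext^1(\Z,H_2(X,\Z))=\ker(\can)$ of the bottom row and confirms that the image of $\xi$ in $H^3_\cM(X,\Z(2))$ lies in $\ker[H^3_\cM\to H_1(X,\Z)_{\mathrm{tor}}]$, so that $\ol\reg(\xi)$ is defined as the projection of $\reg(\xi)$. Then I would compare with $\Phi(\gamma_\xi)$: the map $\Phi$ is built by lifting $\gamma_\xi\in\ker(i)$ along the top-row surjection $\can$, the ambiguity lying in $\Image[\Ext^1(\Z,H_2(Z,\Z))\to\Ext^1(\Z,H_2(X,\Z))]$, which dies after projecting to $\Ext^1(\Z,H_2(X,\Z)/H_2(Z))$; taking $\reg_Z(\xi)$ as the lift yields $\Phi(\gamma_\xi)=\ol\reg(\xi)$, and the explicit description identifies this with \eqref{bei-1} for $\gamma=\gamma_\xi$.

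The hard part will not be the diagram chase but the explicit description of $\Phi$ itself, namely the claim that its class is the pullback \eqref{bei-1} of the relative homology sequence $0\to H_2(X,\Z)/H_2(Z)\to H_2(X,Z;\Z)\os{\partial}{\to}H_1(Z,\Z)$ along $\Z\gamma\hra H_1(Z,\Z)$. Establishing this requires matching the mixed Hodge structure extension packaged by $H^3_{\cD,Z}(X,\Z(2))$ with the one coming from relative homology: concretely, one must identify Carlson's extension class attached to $\reg_Z(\xi)$ — the difference of an integral lift and an $F^0$-lift of the generator of $\Z$ — with the relative cycle $\Gamma\in\partial^{-1}(\Z\gamma_\xi)\subset H_2(X,Z;\Z)$ satisfying $\partial\Gamma=\gamma_\xi$. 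I expect this to follow from functoriality of Deligne cohomology under forget-support together with the identification \eqref{DB-isom-2} of $H^3_\cD(X,\Z(2))'$ with $\Hom_\C(F^1H^2_B(X),\C)$ modulo periods, so that pairing \eqref{bei-1} against a form $\omega_\chi\in F^1$ recovers the period integral $\int_\Gamma\omega$ over the relative cycle; this is precisely the bridge to the regulator formula \eqref{sketch-eq1}.
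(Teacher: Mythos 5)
Your proposal follows essentially the same route as the paper: the paper gives no separate proof beyond ``summing up the above,'' meaning precisely the diagram chase you perform — using $\reg_Z(\xi)$ as the lift along $\can$ in the construction of $\Phi$, noting that the ambiguity of the lift dies after projecting to $\Ext^1(\Z,H_2(X,\Z)/H_2(Z))$, and invoking the explicit description of $\Phi(\gamma)$ as the pullback extension \eqref{bei-1}. You correctly identify that the only genuinely substantive input is the identification of $\Phi(\gamma)$ with \eqref{bei-1} (which the paper asserts without proof), and your sketch of how to verify it via Carlson's extension class and \eqref{DB-isom-2} is consistent with what the paper relies on.
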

Writing down the 1-extension \eqref{bei-1} in a down-to-earth way,
we also have the following proposition.
\begin{prop}\label{bei-prop-1}
Write $H^2(X)_Z:=\ker[H^2(X)\lra H^2(Z)]$, and consider
the surjective map
$F^1H^2_\dR(X,Z)\to F^1H^2_\dR(X)_Z$.
We fix $(c,\eta,\wt{\omega})\in F^1H^2_\dR(X,Z)$ a lifting
for each $\omega\in F^1H^2_\dR(X)_Z$.
Fix $\Gamma_\xi\in H_2(X,Z;\Z)$ a lifting of $\gamma_\xi$. 
Then under the natural identification
\[
\Ext^1(\Z,H_2(X,\Z)/H_2(Z))\cong 
\Hom(F^1H^2_\dR(X)_Z,\C)/\Image H_2(X,\Z),
\]
the Beilinson regulator is given as follows
\[
\ol{\reg}(\xi)=[\omega\to
\langle\Gamma_\xi,(c,\eta,\wt{\omega})\rangle]
\]
where $\langle\, ,\,\rangle$ denotes the natural pairing $H_2(X,Z;\Z)\ot_\Z H^2_\dR(X,Z)
\to\C$.
\end{prop}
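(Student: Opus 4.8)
The plan is to evaluate the $1$-extension \eqref{bei-1} explicitly by means of Carlson's description of extensions of mixed Hodge structures, and then to recognise the resulting period as the relative pairing \eqref{pairingVD2}. Recall that for a mixed Hodge structure $A$ of weights $\le -1$ one has $\Ext^1(\Z,A)\cong A_\C/(F^0A_\C+A_\Z)$, and that the class of an extension $0\lra A\lra E\lra\Z\lra 0$ is represented by $e_\Z-e_F$, where $e_\Z\in E_\Z$ is any integral lift of $1\in\Z$ and $e_F\in F^0E_\C$ is any lift lying in the zeroth step of the Hodge filtration. Applying this to $A=H_2(X,\Z)/H_2(Z)$ and $E=\partial^{-1}(\Z\gamma_\xi)$, the hypothesis $\gamma_\xi\in\Image\partial$ guarantees that an integral lift of $1$ is furnished by $\Gamma_\xi\in H_2(X,Z;\Z)$, since $\partial\Gamma_\xi=\gamma_\xi$.

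First I would make the identification in the statement precise. Poincar\'e--Lefschetz duality pairs $H_2(X,Z;\Z)$ with the relative de Rham cohomology $H^2_\dR(X,Z)$, and under the induced pairing $H_2(X)\ot H^2(X)\to\C$ the subspace $H_2(Z)$ annihilates $H^2(X)_Z=\ker[H^2(X)\to H^2(Z)]$. Hence $A_\C$ embeds into $\Hom(H^2(X)_{Z},\C)$. Tracking Hodge types on the weight $-2$ structure $A$ then shows that $F^0A_\C$ is precisely the annihilator of $F^1H^2(X)_Z$ (among the pieces of $H^2(X)_Z$, only the $(0,2)$-component pairs nontrivially with $F^0A_\C$, and that component is excluded from $F^1$). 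This yields the asserted isomorphism $A_\C/(F^0A_\C+A_\Z)\cong\Hom(F^1H^2_\dR(X)_Z,\C)/\Image H_2(X,\Z)$, which is the concrete form of \eqref{DB-isom-2}, and reduces the claim to evaluating $e_\Z-e_F$ against each $\omega\in F^1H^2(X)_Z$.

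The computation is then short. Since $\partial(e_\Z-e_F)=0$, the difference lies in the image of $H_2(X)\to H_2(X,Z)$; as this map is adjoint to the restriction $H^2_\dR(X,Z)\to H^2_\dR(X)$ under the relative pairing \eqref{pairingVD1}, and the fixed lift $(c,\eta,\wt\omega)$ restricts to $\omega$, we obtain $\langle e_\Z-e_F,\omega\rangle=\langle e_\Z-e_F,(c,\eta,\wt\omega)\rangle$. Now $e_F\in F^0H_2(X,Z)_\C$ while $(c,\eta,\wt\omega)\in F^1H^2_\dR(X,Z)$, and $F^0$ is orthogonal to $F^1$ under this pairing, paralleling the absolute orthogonality $F^0H_2(X)\perp F^1H^2(X)$; hence $\langle e_F,(c,\eta,\wt\omega)\rangle=0$. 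With $e_\Z=\Gamma_\xi$ this gives $\langle e_\Z-e_F,\omega\rangle=\langle\Gamma_\xi,(c,\eta,\wt\omega)\rangle$, which by \eqref{pairingVD2} equals $\int_{\Gamma_\xi}\wt\omega-\int_{\partial\Gamma_\xi}\eta+c(\rho^{-1}(\partial\Gamma_\xi))$. Reading this through the identification above yields $\ol\reg(\xi)=[\omega\to\langle\Gamma_\xi,(c,\eta,\wt\omega)\rangle]$, as claimed.

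The step I expect to be the main obstacle is the filtration bookkeeping underlying the two orthogonality statements: verifying that, under Poincar\'e--Lefschetz duality between $H_2(X,Z)$ and $H^2_\dR(X,Z)$, the condition that $(c,\eta,\wt\omega)$ lie in $F^1$ of the relative de Rham complex $\cA^\bullet(X,Z)$ is exactly dual to $e_F$ lying in $F^0$ of the homology extension, with compatible signs. Concretely this means comparing the relative de Rham exact sequence \eqref{exp-9} with the homology sequence preceding \eqref{bei-1}, checking that the connecting and restriction maps are mutually adjoint, and confirming that the correction terms $-\int_{\partial\Gamma_\xi}\eta+c(\rho^{-1}(\partial\Gamma_\xi))$ in \eqref{pairingVD2} are precisely the contribution needed to make the naive period $\int_{\Gamma_\xi}\wt\omega$ represent the Hodge-normalised class $e_\Z-e_F$. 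Once this compatibility is established, the proposition follows at once.
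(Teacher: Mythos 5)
Your proposal is correct and follows exactly the route the paper intends: the paper gives no separate proof, presenting the proposition as what one gets by ``writing down the 1-extension \eqref{bei-1} in a down-to-earth way,'' i.e.\ by Carlson's representative $e_\Z-e_F$ evaluated against $F^1H^2_\dR(X)_Z$ via the relative pairing, with $e_F\in F^0$ killed by orthogonality of the filtrations. Your filtration bookkeeping (duality of $F^0H_2(X,Z)$ with $F^1H^2_\dR(X,Z)$ and independence of the choice of lift, which the paper records in the remark following the proposition) is the right verification and matches the paper's implicit argument.
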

Note
\[
\langle\Gamma_\xi,(\wt{\omega},\eta)\rangle
=\int_{\Gamma_\xi}\wt{\omega}-\int_{\gamma_\xi}(c,\eta)
\]
and this does not depend on the choice of $(c,\eta,\wt{\omega})$ because
$\gamma_\xi\in H^{0,0}$ and hence $\int_{\gamma_\xi}$ annihilates elements of
$F^1H^1_\dR(Z)$.
We should keep notice that,
it is {\it not} true in general that
$\int_{\Gamma_\xi}\wt{\omega}$ depends only on the cohomology class $\omega\in
H^2_\dR(X)$.

\subsection{Deligne's canonical extensions and lifting of differential forms}\label{del-sect}
It is not so simple to compute ``$(\wt{\omega},\eta)$'' 
in Proposition \ref{bei-prop-1} for a given 
$\omega\in F^1H^2_\dR(X)_Z$. 
In the case that $X$ is a fibration of curves and $Z$
is a fibral divisor (i.e. $f(Z)$ are points), there is a nice technique 
developed in \cite{a-real} (see also \cite{a-o-1} Appendix)
to solve the question by using Deligne's canonical extensions.

\medskip

Let $C$ be a smooth projective curve.
We mean by a fibration of curves over $C$ a surjective and projective morphism
$f:X\to C$ with $X$ a nonsingular surface.
Let $S\subset C$ be a Zariski open set such that $f$ is smooth over $S$.
Put $T:=C\setminus S$ and $U:=f^{-1}(S)$.
Then $\cH:=H^1_\dR(U/S)$ is a vector bundle over $S$ endowed with 
the Gauss-Manin connection $\nabla$.
Let $\cH_e$ denote Deligne's canonical extension on $C$, so that the connection extends to
\[
\nabla:\cH_e\lra \Omega^1_C(\log  T)\ot \cH_e
\] 
and the eigenvalues of the residue $\Res(\nabla)$ belong to $[0,1)$.
Let $j:S\hra C$ be the embedding.
One can easily show that the canonical map
\[
[\cH_e\to\Omega^1_C(\log T)\ot \cH_e]\lra
[j_*\cH\to\Omega^1_S\ot j_*\cH]
\]
of complexes of sheaves is a quasi-isomorphism, so that one has the isomorphism
\[
H^1_\dR(C,\cH_e):={\mathbb H}^1_\zar(C,\cH_e\to\Omega^1_C(\log T)\ot \cH_e)\cong 
H^1_\dR(S,\cH)
\hra H^2_\dR(U).
\]
Consider the commutative diagram
\[
\xymatrix{
&0\ar[d]\\
&\Omega^1_C(\log  T)\ot F^1\cH_e\ar[d]\\
F^1\cH_e\ar@{=}[d]\ar[r]^{\nabla\qquad}&\Omega^1_C(\log  T)\ot \cH_e\ar[d]\\
F^1\cH_e\ar[r]^{{\ol\nabla}\hspace{1cm}}&\Omega^1_C(\log  T)\ot \cH_e/F^1\ar[d]\\
&0
}
\]
where $F^1\cH_e:=\cH_e\cap j_*F^1\cH$ with $j:S\hra C$.
Let $C^\circ\subset C$ be a Zariski open set such that 
$\nabla|_{C^\circ}$ is bijective. Put  $X^\circ:=f^{-1}(C^\circ)$.
We assume that $C^\circ\ne\emptyset$.
We do not assume neither $C^\circ\subset S$ nor $C^\circ\supset S$.
Then the above diagram gives rise to an exact sequence
\[
\vg(C^\circ,F^1\cH_e)\os{\nabla}{\lra}\vg(C^\circ,\Omega^1_C(\log  T)\ot \cH_e)
\to
\vg(C^\circ,\Omega^1_C(\log  T)\ot F^1\cH_e)\to0.
\]
We thus have a composition of maps
\begin{align*}
F^1H^1_\dR(S,\cH)&= H^1_\zar(C,F^1\cH_e\to\Omega^1_C(\log  T)\ot \cH_e)\\
&\to H^1_\zar(C^\circ,F^1\cH_e\to\Omega^1_C(\log  T)\ot \cH_e)\\
&\os{\cong}{\to} \vg(C^\circ,\Omega^1_C(\log  T)\ot F^1\cH_e)\\
&\subset \vg(X^\circ\cap U,\Omega^2_X)
\end{align*}
which we denote by $\Theta_\Del$. This is an injective map (\cite{a-real} Prop. 3.10).
Let $W_\bullet=W_\bullet H_\dR(S,\cH)$ denote the weight filtration.
One easily sees that the image of $F^1W_2H^1_\dR(S,\cH)=F^1H^1_\dR(S,\cH)\cap W_2$
lies in the subspace $\vg(X^\circ,\Omega^2_{X^\circ})$ (\cite{a-real} (3.25)),
so that
one also has an injective map
\begin{equation}\label{Theta-del}
\Theta_\Del:F^1W_2H^1_\dR(S,\cH)\lra \vg(X^\circ,\Omega^2_{X^\circ}).
\end{equation}
For $\omega\in F^1W_2H^1_\dR(S,\cH)$, we define
\begin{equation}\label{del-prop-1-eq0}
\omega_\Del:=\Theta_\Del(\omega).
\end{equation}
Let
\[
H^2_\dR(X)_\fib:=\ker[H^2_\dR(X)\lra \prod_{t\in C} H^2_\dR(f^{-1}(t))]
\]
be the subspace perpendicular to all fibral divisors.
We define $H^2_\dR(X^\circ)_\fib$ and $H^2_\dR(U)_\fib$ similarly.
Note $H^2_\dR(U)_\fib\subset H^1_\dR(S,\cH)$.
Then we see
\begin{equation}\label{del-eq-1}
\omega|_{X^\circ}\equiv (\omega|_U)_\Del \quad \mbox{in }H^2_\dR(X^\circ)_\fib
\end{equation}
for $\omega\in F^1H^2_\dR(X)_\fib$.
Indeed $((\omega|_U)_\Del)|_{X^\circ\cap U}
\equiv\omega|_{X\circ\cap U}$ in
$H^2_\dR(X^\circ\cap U)_\fib$ by the definition, and hence \eqref{del-eq-1} follows from the fact that
$H^2_\dR(X^\circ)_\fib\to H^2_\dR(X^\circ\cap U)_\fib$ is injective
(\cite{a-real} Prop. 3.4 (2)).
\begin{prop}[\cite{a-real} Thm. 3.12, \cite{a-o-1} Lem. 7.3]\label{del-prop-1}
Let $Z\subset X^\circ$ be a fibral divisor (i.e. $f(Z)$ are closed points).
Write $H^2_\dR(X^\circ)_Z:=\ker[H^2_\dR(X^\circ)\to H^2_\dR(Z)]$ and consider
\[
\xymatrix{
H^1_\dR(Z)\ar[r]&H^2_\dR(X^\circ,Z)\ar[r]
&H^2_\dR(X^\circ)_Z\ar[r]&0\\
&&H^2_\dR(X^\circ)_\fib\ar[u]^\cup&
}
\]
Assume $C^\circ\ne \emptyset$.
Then for $\omega\in F^1H^2_\dR(X)_\fib$, the element
\begin{equation}\label{del-prop-1-eq1}
(0,0,(\omega|_U)_\Del)\in H^2_\dR(X^\circ,Z)
\end{equation}
is a lifting of $\omega|_{X^\circ}$ and it belongs to
$F^1H^2_\dR(X^\circ,Z)$.
\end{prop}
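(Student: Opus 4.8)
The plan is to verify directly that the triple $(0,0,(\omega|_U)_\Del)$ is a cocycle representing a class in $H^2_\dR(X^\circ,Z)$, and then to deduce the two assertions from the comparison \eqref{del-eq-1} together with the holomorphic type of $\omega_\Del$. The first thing to settle is that $\omega_\Del:=(\omega|_U)_\Del$ is defined and is a \emph{global} holomorphic two-form on all of $X^\circ$. Since $X$ is projective and smooth, $H^2_\dR(X)$ is pure of weight $2$, so the restriction $H^2_\dR(X)\to H^2_\dR(U)$ lands in the lowest weight piece $W_2H^2_\dR(U)$; combined with $H^2_\dR(U)_\fib\subset H^1_\dR(S,\cH)$ and the compatibility of restriction with $F^\bullet$, this shows $\omega|_U\in F^1W_2H^1_\dR(S,\cH)$, which is precisely the domain of the map $\Theta_\Del$ in \eqref{Theta-del}. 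Hence $\omega_\Del=\Theta_\Del(\omega|_U)\in\vg(X^\circ,\Omega^2_{X^\circ})$ is holomorphic on the whole of $X^\circ$, not merely on $X^\circ\cap U$.

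Next I would check the two cocycle conditions recorded after \eqref{element}, namely $d\omega_\Del=0$ and $j^*\omega_\Del=d\eta$ with $\eta=0$. The closedness is automatic: $\omega_\Del$ has type $(2,0)$, so $\bar\partial\omega_\Del=0$, while $\partial\omega_\Del$ would be of type $(3,0)$ and hence vanishes on the surface $X^\circ$; thus $d\omega_\Del=0$. For the boundary condition I must show $(j\rho)^*\omega_\Del=0$ in $\cA^2(\wt{Z})$, where $\rho\colon\wt{Z}\to Z$ is the normalization; but a form of type $(2,0)$ restricted to the one-dimensional $\wt{Z}$ is automatically zero. With $c=\eta=0$ the remaining relation $j^*\omega_\Del=d\eta$ reads $0=0$, so $(0,0,\omega_\Del)$ is a legitimate representative of a class in $H^2_\dR(X^\circ,Z)$.

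It then remains to identify the class. Under the projection $H^2_\dR(X^\circ,Z)\to H^2_\dR(X^\circ)$ from the exact sequence \eqref{exp-9}, the triple $(0,0,\omega_\Del)$ maps to the absolute class $[\omega_\Del]$. By \eqref{del-eq-1} one has $\omega|_{X^\circ}\equiv\omega_\Del$ in $H^2_\dR(X^\circ)_\fib$, and since $Z$ is fibral this subspace sits inside $H^2_\dR(X^\circ)_Z$ via the natural inclusion in the diagram; transporting the equality there shows that $(0,0,\omega_\Del)$ is a lifting of $\omega|_{X^\circ}$. Finally, as the top-degree component $\omega_\Del$ is holomorphic of type $(2,0)$ and the boundary data $c,\eta$ vanish, the class lies in $F^2H^2_\dR(X^\circ,Z)\subset F^1H^2_\dR(X^\circ,Z)$, which is the second assertion.

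The genuinely substantive inputs here are the two facts I import from the preceding subsection rather than reprove: the regularity statement that $\Theta_\Del$ carries $W_2$ into global holomorphic two-forms on $X^\circ$ (following \cite{a-real}), and the comparison \eqref{del-eq-1}. Granting these, the argument is a bookkeeping check of the cocycle conditions and of Hodge type. The point that needs the most care is the weight estimate used to place $\omega|_U$ in $W_2H^1_\dR(S,\cH)$: one must make sure the bound coming from properness of $X$ is compatible with the inclusion $H^2_\dR(U)_\fib\subset H^1_\dR(S,\cH)$ through which $\omega|_U$ is fed into $\Theta_\Del$, so that $\Theta_\Del(\omega|_U)$ is indeed globally regular on $X^\circ$ and the triple is well defined.
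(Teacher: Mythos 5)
First, a remark on context: the paper does not prove Proposition \ref{del-prop-1} at all --- it is quoted from \cite{a-real} Thm.~3.12 and \cite{a-o-1} Lem.~7.3 --- so there is no internal proof to compare against. Your verification that $(0,0,(\omega|_U)_\Del)$ is a legitimate cocycle (a regular $2$-form on a surface is closed and pulls back to zero on a curve) and your identification of its image in $H^2_\dR(X^\circ)$ with $\omega|_{X^\circ}$ via \eqref{del-eq-1} and the inclusion $H^2_\dR(X^\circ)_\fib\subset H^2_\dR(X^\circ)_Z$ are fine, as is the reduction of the well-definedness of $(\omega|_U)_\Del$ to the quoted facts about $\Theta_\Del$.

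The gap is in your last step, where you place the class in $F^2H^2_\dR(X^\circ,Z)\subset F^1$ on the grounds that the representative $(\omega|_U)_\Del$ is a holomorphic $(2,0)$-form and $c=\eta=0$. The surface $X^\circ=f^{-1}(C^\circ)$ is in general \emph{not} projective (in the worked example $X_2^\circ=f_2^{-1}(\P^1\setminus\{\infty\})$), and for a non-compact variety the Hodge filtration is computed from forms with logarithmic poles on a good compactification, not from the bidegree of an arbitrary global regular representative: a regular $2$-form on $X^\circ$ can have high-order poles along $X\setminus X^\circ$, and then its class need not lie in $F^2$, nor a priori in $F^1$. Indeed your $F^2$ claim fails in the paper's own application: there $\omega=\omega_C$ is a nonzero algebraic cycle class in the $e$-part, $F^2H^2_\dR(X_2)(e)=0$ by \eqref{dim-formula-1}, and $\omega_C|_{X_2^\circ}=[(\omega_C|_U)_\Del]\ne 0$ since it pairs nontrivially with $(1-\sigma_{-1})\Delta$; by strictness of the MHS morphism $H^2_\dR(X_2)\to H^2_\dR(X_2^\circ)$, an element of the image lying in $F^2$ must come from $F^2H^2_\dR(X_2)(e)=0$, a contradiction. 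Since your only argument for $F^1$-membership is this (invalid) $F^2$ argument, the second assertion of the proposition is not established. The actual content of the $F^1$ statement --- and the reason the construction passes through Deligne's canonical extension at all --- is that $(\omega|_U)_\Del$ is not just any regular $2$-form but lies in $\vg(C^\circ,\Omega^1_C(\log T)\ot F^1\cH_e)$; it is this logarithmic structure with values in the Hodge subbundle, controlled along $T$ and along $C\setminus C^\circ$, that forces the relative class into $F^1H^2_\dR(X^\circ,Z)$. That is exactly what \cite{a-real} Thm.~3.12 supplies and what your argument would still need to prove.
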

We can construct $(\omega|_U)_\Del$ only when $C^\circ\ne\emptyset$.
This is satisfied if $f$ has totally degenerate semistable reductions
(\cite{a-real} Lem. 3.7).

\subsection{Explicit Log formula}\label{ELF-sect}
Let $f:X\to \P^1$ be a HG fibration with multiplication by $(R,e)$. 
Suppose $\dim X=2$ for simplicity. Consider the Cartesian square
\[
\xymatrix{
X_l\ar[rd]_{f_l}\ar[r]^i&X^\prime_l\ar[r]\ar[d]
\ar@{}[rd]|{\square}&X\ar[d]^f\\
&\P^1\ar[r]^{t\to t^l}&\P^1
}
\]
where $i$ is a desingularization.
Let $Z:=\cup_i f_l^{-1}(\zeta_l^i)$ be the inverse image of $f^{-1}(1)$,
a totally degenerate semistable fiber.
Let $C=\sum n_i C_i$ be a 1-cycle in $X_l$ with $\Z$-coefficients which is perpendicular to
all components of singular fibers, in other words the cycle class
$\omega_C=\sum n_i\omega_{C_i}\in H^2_\dR(X_l)\cap H^{1,1}$ belongs to $H^2_\dR(X_l)_\fib$.
Let $h_{C_i}:H_2(X_l,\Z)\cong H^2(X_l,\Z(2))\to H^2(C_i,\Z(2))\to \Z(1)$ 
be the composition of the pull-back of the embedding $C_i\to X_l$
and the trace map.
Note that the cycle map $\Z\to H^2(X_l,\Z(1))$, $1\mapsto\omega_{C_i}$ 
coincides with the dual map of $h_{C_i}$ (modulo torsion).
Put $h_C:=\sum n_ih_{C_i}$.
Since $C$ is perpendicular to fibral divisors, $h_C$ factors through $H_2(X_l)/\langle \fib\rangle$
where $\langle \fib\rangle$ denotes the image of $H_2$ of fibral divisors.
Hence
we have a commutative diagram
\begin{equation}\label{ex-comm-1}
\xymatrix{
\Ext^1(\Z,H_2(X_l,\Z)/\langle \fib\rangle)\ar[r]^{\cong\qquad}\ar[d]_{h_C}
& \Hom(H^2_\dR(X_l)_\fib,\C)/\Image H_2(X_l,\Z)\ar[d]^{\omega^\lor_C}\\
\Ext^1(\Z,\Z(1))\ar[r]^\cong& \C/\Z(1)\\
}
\end{equation}
where $\omega^\lor_C$ is the map 
induced from $\C\to H^2_\dR(X_l)_Z$, $1\mapsto \omega_C$.
Let $j:\coprod\wt{C}_i\to \cup C_i\hra X_l$ be the composition of normalization
and the embedding.
Let $T_C=\sum n_i\mathrm{Tr}_{C_i}:\oplus H^2(\wt{C}_i,\Z(2))\to\Z(1)$ be the sum 
of the trace maps.
Let $\tr_{\wt{C}_i}:H^3_\cM(\wt{C}_i,\Z(2))\to
H^1_\cM(\Spec\ol\Q,\Z(1))$ be the transfer map induced from the structure morphism
 $\wt{C}_i\to\Spec\ol\Q$. Put $\tr_C:=\sum n_i \tr_{\wt{C}_i}$.
 Then it follows from the compatibility of the Beilinson regulator maps
 and the fact that the regulator on $H^1_\cM(\Spec\C,\Z(1))\cong\C^\times$ coincides with log that
we have a commutative diagram
\begin{equation}\label{ex-comm-2}
\xymatrix{
&&
\Ext^1(\Z,H_2(X^\circ_l)/\langle \fib\rangle)\ar[d]^{i}\\
H^3_\cM(X_l,\Z(2))\ar[r]^\reg\ar[d]_{j^*}\ar@/^20pt/[rr]^{\ol\reg}
& \Ext^1(\Z,H_2(X_l,\Z))\ar[d]_{j^*}\ar[r]&\Ext^1(\Z,H_2(X_l)/\langle \fib\rangle)\ar[ddl]^{h_C}\\
\bigoplus_i H^3_\cM(\wt{C}_i,\Z(2))\ar[r]\ar[d]_{\tr_C}
& \bigoplus_i \Ext^1(\Z,H^2(\wt{C}_i,\Z(2)))\ar[d]_{T_C}\\
H^1_\cM(\Spec\ol\Q,\Z(1))\ar[r]^\log&\Ext^1(\Z,\Z(1))
}
\end{equation}
where $X_l^\circ$ is as in \S \ref{del-sect}.
Note $Z\subset X_l^\circ$ as $Z$ is a union of 
totally degenerate semistable fibers (\cite{a-real} Lemma 3.7).
Let $\xi\in H^3_{\cM,Z}(X_l,\Z(2))$ such that $\gamma_\xi:=c(\xi)$
lies in the image of $\partial: H_2(X_l^\circ,Z;\Z)\to H_1(Z;\Z)$
where $c: H^3_{\cM,Z}(X_l,\Z(2))\to H_1(Z,\Z)\cap H^{0,0}$ is the cycle map
(cf. \S \ref{bei-sect}).
Let 
\[
e(\gamma_\xi)\in\Ext^1(\Z,H_2(X^\circ_l,\Z)/\langle \fib\rangle)
\]
be the extension data arising from
the exact bottom row of the commutative diagram
\[
\xymatrix{
0\ar[r]& H_2(X_l^\circ)/H_2(Z)\ar[r]& H_2(X^\circ_l,Z)\ar[r]^\partial& H_1(Z)\\
0\ar[r]& H_2(X_l^\circ)/H_2(Z)\ar[r]\ar@{=}[u]
&\partial^{-1}(\Z\gamma_\xi)\ar[r]\ar[u]& \Z\ar[r]\ar[u]_a&0
}
\]
where $a:1\mapsto \gamma_\xi$.
Then we have
\begin{equation}\label{ex-comm-2-1}
\ol{\reg}(\xi)=\pm i(e(\gamma_\xi))\in \Ext^1(\Z,H_2(X_l,\Z)/\langle \fib\rangle).
\end{equation}
On the other hand,
we have
\begin{equation}\label{ex-comm-2-2}
e(\gamma_\xi)=\left[\omega\mapsto\langle\Gamma_\xi,(0,0,(\omega|_U)_\Del)\rangle
=\int_{\Gamma_\xi}(\omega|_U)_\Del\right],\quad \omega\in F^1H^2_\dR(X_l^\circ)_\fib
\end{equation}
by Propositions \ref{bei-prop-1}, \ref{del-prop-1} 
where $\Gamma_\xi\in H_2(X_l^\circ,Z;\Z)$ denotes an arbitrary lifting of $\gamma_\xi$.
Applying the map $h_C$ in \eqref{ex-comm-2} on \eqref{ex-comm-2-1}, 
we have from \eqref{ex-comm-1} and \eqref{ex-comm-2-2} the following theorem:
\begin{thm}\label{ex-thm-1}
Let $\Gamma_\xi\in H_2(X_l^\circ,Z;\Z)$ be a lifting of $\gamma_\xi$. Then
\begin{equation}\label{ex-eq1}
\log \tr_C(j^*\xi)
=\int_{\Gamma_\xi}(\omega_C|_U)_\Del\in\C/\Z(1).
\end{equation}
\end{thm}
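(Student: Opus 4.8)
The plan is to read off the identity \eqref{ex-eq1} as the commutativity of the diagram \eqref{ex-comm-2}, with the two sides computed through the two routes the diagram offers. The left-hand side $\log\tr_C(j^*\xi)$ is obtained by chasing $\xi$ down the left column: first restrict via $j^*$ to $\bigoplus_i H^3_\cM(\wt C_i,\Z(2))$, then apply the transfer $\tr_C=\sum n_i\tr_{\wt C_i}$ into $H^1_\cM(\Spec\ol\Q,\Z(1))\cong\ol\Q^\times$, and finally apply the regulator, which on $H^1_\cM(\Spec\C,\Z(1))$ is literally $\log$. The right-hand side is obtained by chasing $\reg(\xi)$ across the top and then down via $h_C$. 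Because both routes land in $\Ext^1(\Z,\Z(1))\cong\C/\Z(1)$ and the diagram commutes, the two expressions agree modulo $\Z(1)$.

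First I would assemble the pieces already proved in the excerpt. The composite $\ol\reg$ applied to $\xi$ is identified in \eqref{ex-comm-2-1} as $\pm i(e(\gamma_\xi))$, and the extension class $e(\gamma_\xi)$ is described explicitly in \eqref{ex-comm-2-2} as the functional $\omega\mapsto\int_{\Gamma_\xi}(\omega|_U)_\Del$ on $F^1H^2_\dR(X_l^\circ)_\fib$; this last step is exactly Propositions \ref{bei-prop-1} and \ref{del-prop-1}, the point being that $(0,0,(\omega|_U)_\Del)$ is a genuine lifting in $F^1H^2_\dR(X_l^\circ,Z)$ so the pairing \eqref{pairingVD2} reduces to the single integral $\int_{\Gamma_\xi}(\omega|_U)_\Del$ (the $c$- and $\eta$-terms vanish). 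Next I would apply $h_C$ to this extension class. By the definition of $h_C=\sum n_i h_{C_i}$ as the dual of the cycle map $1\mapsto\omega_C$, the square \eqref{ex-comm-1} says that $h_C$ of the functional in \eqref{ex-comm-2-2} is its value at $\omega=\omega_C$, namely $\int_{\Gamma_\xi}(\omega_C|_U)_\Del\in\C/\Z(1)$. This produces the right-hand side.

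It then remains to match this with the left column, and here the content is the compatibility already asserted before \eqref{ex-comm-2}: the Beilinson regulator commutes with the pull-back $j^*$ to the normalized curves and with the transfer maps $\tr_{\wt C_i}$, and on $H^1_\cM(\Spec\C,\Z(1))$ the regulator is $\log$. Granting this, commutativity of \eqref{ex-comm-2} forces $\log\tr_C(j^*\xi)$ and $h_C(\ol\reg(\xi))$ to be equal in $\C/\Z(1)$, and combining with the computation of the right column yields \eqref{ex-eq1}. The hypothesis $Z\subset X_l^\circ$ (valid since $Z$ is totally degenerate semistable, by \cite{a-real} Lemma 3.7) is what guarantees $C^\circ\ne\emptyset$ and hence that $(\omega_C|_U)_\Del$ can be formed and that Proposition \ref{del-prop-1} applies.

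The main obstacle is bookkeeping rather than a single hard estimate: one must verify that the two independent descriptions of $\ol\reg(\xi)$ — the homological extension $e(\gamma_\xi)$ via \eqref{bei-1} and the transfer-of-pullback via the left column of \eqref{ex-comm-2} — are carried to the same class by $h_C$ and by $\log$ respectively, i.e. that every square in \eqref{ex-comm-2} commutes and that the sign $\pm$ and the $\Z(1)$-ambiguity are handled consistently. The delicate point I would check carefully is that $\gamma_\xi$ really lies in $\Image\partial$ so that $\partial^{-1}(\Z\gamma_\xi)$ defines an extension of $\Z$ (this is the standing hypothesis on $\xi$), and that the normalization map $j$ does not disturb the trace-map compatibility; once these are in place the theorem follows by pure diagram chase.
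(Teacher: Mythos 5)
Your proposal follows essentially the same route as the paper: the paper derives Theorem \ref{ex-thm-1} precisely by applying $h_C$ from \eqref{ex-comm-2} to the identity \eqref{ex-comm-2-1}, combining the explicit description \eqref{ex-comm-2-2} of $e(\gamma_\xi)$ (via Propositions \ref{bei-prop-1} and \ref{del-prop-1}) with the squares \eqref{ex-comm-1} and the left column of \eqref{ex-comm-2}, exactly as you describe. Your additional remarks on the standing hypotheses ($\gamma_\xi\in\Image\partial$, $Z\subset X_l^\circ$) and on the $\pm$ ambiguity are consistent with the paper's treatment.
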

As is shown in \cite{a-o-1} Proposition 2.6 (ii) or \cite{a-o-2} \S 7.4, the last term 
of \eqref{ex-eq1} is written in terms of the special values
of ${}_3F_2$ at $x=1$. 

\section{Examples of Explicit Log Formula}\label{rec-sect}
In this section, we demonstrate how to prove 
\begin{equation}\label{rec-eq0}
{}_3F_2\left(
{\frac{1}{6},\frac{5}{6},\frac{1}{2}\atop 1,\frac{3}{2}};1\right)=\frac{3\sqrt{3}}{2\pi}\log(2+\sqrt{3}).
\end{equation}

\medskip

Let $f:X\to \P^1$ be an elliptic fibration whose generic fiber
$f^{-1}(t_0)$ is defined by the affine equation
\[
y^2=2x^3-3x^2+t_0.
\]
This is a HG fibration with multiplication by $(\Q,\mathrm{id})$
in the sense of \S \ref{HG-sect} (cf. Example \ref{HG-exmp}).
Let $l\geq 1$ be an integer.
Let $f_l:X_l\to\P^1$ be an elliptic fibration defined by the affine equation
$y^2=2x^3-3x^2+t^l$ with $t^l=t_0$. 
 
The elliptic fibration $f_l$ is endowed with an action of $\mu_l$ the group of $l$-th roots of
$1$. Namely, to $\zeta\in \mu_l$ we associate
 $\sigma_\zeta\in \Aut(X_l)$
an automorphism defined by $\sigma(x,y,t)=(x,y,\zeta t)$.
We thus have $\mu_l\hra\Aut(X_l)$ and $\Q[\mu_l]\hra \End(R^1f_{l*}\Q)$.
Let
\[
M_l:=H^2(X_l,\Q)/\langle\mbox{fibral divisors},\infty\rangle\cong 
W_2H^1(\P^1\setminus\{0,1,\ldots,\zeta_l^{l-1},\infty\},R^1f_{l*}\Q)
\]
where $\infty\subset X_l$ denotes the section $y=\infty$.
For a projector $e:\Q[\mu_l]\to F$ onto a number field $F$, we denote by 
$M_l(e):=F\ot_{e,\Q[\mu_l]}M_l$ the $e$-part.
One easily shows, 
\begin{equation}\label{dim-formula}
\dim_FM_l(e)=
\begin{cases}
1& l/d\ne1,6\\
0&l/d=1,6
\end{cases}\quad d:=\sharp\ker[e:\mu_l\to F^\times].
\end{equation}
This implies $\dim_F(M_l(e)\cap H^{0,0})\leq 1$, and then
\begin{equation}\label{dim-formula-1}
M_l(e)\cap H^{0,0}\ne0
\,\Leftrightarrow\,
F^2M_l(e)=F^2H^2_\dR(X_l)(e)=0
\,\Leftrightarrow\,
2\leq l/d\leq 5.
\end{equation} 
Let $Z$ be the union of totally degenerate semistable fibers over $t^l=1$, and consider elements
\[
\xi_j:=\left(\frac{y-\sqrt{3}(x-1)}{y+\sqrt{3}(x-1)},f_l^{-1}(\zeta_l^j)\right)
\in H_{\cM,Z}^3(X_l,\Z(2)),\quad j\in\{0,1,\ldots,l-1\}.
\]
It is straightforward to see that
$c(\xi_j)\in H_1(f_l^{-1}(\zeta_l^j),\Z)\cong \Z$ is a basis
where $c:H^3_{\cM,Z}(X_l,\Z(2))\to H^3_Z(X_l,\Z(2))=H_1(Z,\Z)$ is the cycle map.

\medskip

To prove \eqref{rec-eq0}
we apply Theorem \ref{ex-thm-1} \eqref{ex-eq1} to the elliptic fibration $f_l$
in case that $l=2$ and
$e:\Q[\mu_2]\to\Q$ is the projector such that $e(\sigma_{-1})=-1$
($\Leftrightarrow$ $d=1$).
Put $\xi:=\xi_0$. By \eqref{dim-formula} and \eqref{dim-formula-1}, 
\begin{equation}\label{dim-formula-2}
M_2(e)=M_2=W_2H^1(\P^1\setminus\{0,\pm1,\infty\},R^1f_{2*}\Q)\cong \Q,
\end{equation}
and this is a Tate-Hodge structure of type $(1,1)$ (and hence generated by a cycle class).

\medskip

\noindent{\bf Step 1}.
The 1st step is to find a (nontrivial) divisor $C$ which is perpendicular to all fibral divisors
and generates the $e$-part $M_2(e)$.
Let 
\[
C_1:x=0,\,y=t,\quad
C_2:x=0,\,y=-t
\]
be sections in $X_2$.
Then $\sigma_{-1}(C_1)=C_2$, and hence the cycle class $[C_1]-[C_2]\in H^2(X_2)$ belongs to the $e$-part.
Let $f_2^{-1}(\infty)=F_1+F_2+F_3+2(F_4+F_5+F_6)+3F_7$ be the singular fiber at $t=\infty$
(see the figure in below).
Put
\[
C:=3(C_1-C_2)+2(F_1-F_2)+F_4-F_5.
\]
Then this is perpendicular to all fibral divisors (see the following figure),
and $M_2(e)=\Q[C]$.

\vspace{1cm}

\begin{center}
{\unitlength 0.1in%
\begin{picture}( 54.2000, 22.5000)(  6.6000,-28.3000)%
%
\special{pn 8}%
\special{ar 1764 1642 1138 1138  1.9494474  4.3716705}%
%
\special{pn 8}%
\special{ar 280 1652 1138 1138  5.0531074  1.1921452}%
%
\special{pn 8}%
\special{pa 1170 1340}%
\special{pa 1790 1340}%
\special{fp}%
%
\special{pn 8}%
\special{pa 1180 1820}%
\special{pa 1780 1830}%
\special{fp}%
%
\special{pn 8}%
\special{pa 2880 2470}%
\special{pa 6080 2470}%
\special{fp}%
%
\special{pn 8}%
\special{pa 3180 1680}%
\special{pa 3180 2590}%
\special{fp}%
%
\special{pn 8}%
\special{pa 4440 1670}%
\special{pa 4440 2580}%
\special{fp}%
%
\special{pn 8}%
\special{pa 5600 1660}%
\special{pa 5600 2570}%
\special{fp}%
%
\special{pn 8}%
\special{pa 3090 2110}%
\special{pa 3590 1020}%
\special{fp}%
%
\special{pn 8}%
\special{pa 4390 2100}%
\special{pa 4890 1010}%
\special{fp}%
%
\special{pn 8}%
\special{pa 5510 2100}%
\special{pa 6010 1010}%
\special{fp}%
%
\special{pn 8}%
\special{pa 2500 1220}%
\special{pa 3600 1430}%
\special{fp}%
%
\special{pn 8}%
\special{pa 3910 1140}%
\special{pa 5010 1350}%
\special{fp}%
\put(19.2000,-13.4000){\makebox(0,0){$C_1$}}%
\put(19.0000,-18.2000){\makebox(0,0){$C_2$}}%
\put(29.2000,-11.8000){\makebox(0,0){$C_1$}}%
\put(44.0000,-11.2000){\makebox(0,0){$C_2$}}%
\put(36.0000,-9.1000){\makebox(0,0){$F_1$}}%
\put(49.1000,-9.2000){\makebox(0,0){$F_2$}}%
\put(10.4000,-28.9000){\makebox(0,0){$f^{-1}_2(0)$}}%
\put(60.4000,-9.2000){\makebox(0,0){$F_3$}}%
\put(32.8000,-22.0000){\makebox(0,0)[lt]{$F_4$}}%
\put(46.0000,-22.2000){\makebox(0,0)[lt]{$F_5$}}%
\put(57.2000,-22.6000){\makebox(0,0)[lt]{$F_6$}}%
\put(48.8000,-25.8000){\makebox(0,0){$F_7$}}%
\put(33.0000,-28.3000){\makebox(0,0)[lt]{$f_2^{-1}(\infty)=F_1+F_2+F_3+2(F_4+F_5+F_6)+3F_7$}}%
\end{picture}}%

\end{center}

\vspace{1cm}

\noindent{\bf Step 2} (Computing LHS of \eqref{ex-eq1}).
\begin{align*}
\mbox{LHS of \eqref{ex-eq1}}
&=3\log
\left(\frac{y-\sqrt{3}(x-1)}{y+\sqrt{3}(x-1)}|_{f_2^{-1}(1)\cap C_1}\right)
\left(\frac{y-\sqrt{3}(x-1)}{y+\sqrt{3}(x-1)}|_{f_2^{-1}(1)\cap C_2}\right)^{-1}\\
&=3\log
\left(\frac{1+\sqrt{3}}{1-\sqrt{3}}\right)
\left(\frac{-1+\sqrt{3}}{-1-\sqrt{3}}\right)^{-1}
\\
&=6\log(2+\sqrt{3}).
\end{align*}

\medskip

\noindent{\bf Step 3} (Computing $(\omega_C|_U)_\Del$).
Let $S:=\P^1\setminus\{0,\pm 1,\infty\}$ and put $U:=f_2^{-1}(S)$.
Let $X_2^\circ=f^{-1}_2(\P^1\setminus\{\infty\})$ be as in \S \ref{del-sect}.
Let $\omega_C\in H^2_\dR(X_2)_\fib$ be the cycle class. Then we claim
\begin{equation}\label{rec-eq1}
(\omega_C|_U)_\Del=\alpha
dt\frac{dx}{y}\in \vg(X_2^\circ,\Omega^2_{X_2}),\quad \exists \alpha\in \C^\times.
\end{equation}
This is proven in the following way.
Let $\cH:=H^1_\dR(U/S)$ be the vector bundle on 
$S$ equipped with the Gauss-Manin connection $\nabla$.
By \eqref{dim-formula-2},
$W_2H^1_\dR(S,\cH)=F^1W_2H^1_\dR(S,\cH)$ is one-dimensional
and moreover it is spanned by the cycle class $\omega_C|_U$ under the inclusion
$H^2_\dR(X_2)_\fib\hra W_2H^1_\dR(S,\cH)$. 
Note that $(\omega_C|_U)_\Del\ne0$ as 
$\Theta_\Del$ is injective (see \eqref{Theta-del}).
Hence
\begin{equation}\label{rec-eq10}
\Image[\Theta_\Del:F^1W_2H^1_\dR(S,\cH)\to \vg(X^\circ_2,\Omega^2_{X^\circ_2})]
=\C (\omega_C|_U)_\Del.
\end{equation}
On the other hand, we claim
\begin{equation}\label{rec-eq11}
\Image[\Theta_\Del:F^1W_2H^1_\dR(S,\cH)\to \vg(X^\circ_2,\Omega^2_{X^\circ_2})]
=\C dt\frac{dx}{y}.
\end{equation}
The explicit description of $\nabla$ is given as follows (e.g. \cite{a-real} Theorem 6.4)
\begin{equation}\label{rec-eq2}
\begin{pmatrix}
\nabla\left(\frac{dx}{y}\right)&\nabla\left(\frac{xdx}{y}\right)
\end{pmatrix}
=
\begin{pmatrix}
\frac{dx}{y}&\frac{xdx}{y}
\end{pmatrix}A,\quad
A:=\frac{dt_0}{6(t_0-t_0^2)}\begin{pmatrix}
t_0&t_0\\
-1&-t_0
\end{pmatrix}
\end{equation}
where $t_0=t^2$.
Deligne's extension $\cH_e$ of $\cH$ is given by 
a local frame $\{dx/y,xdx/y\}$ on $\P^1\setminus\{\infty\}$ and
$\{dx/y,t^{-1}xdx/y\}$ on a neighborhood of $t=\infty$.
Indeed one easily check that
\[
\nabla(\cH_e)\subset \Omega^1_{\P^1}(\log T)\ot\cH_e,\quad T:=\{0,\pm 1,\infty\}
\]
and any eigenvalue of $\Res(\nabla)$ at a point of $T$ is $0,1/6$ or $5/6$.
Since $F^1\cH_e\cong \O_{\P^1}$ and $\cH_e/F^1\cH_e\cong \O_{\P^1}(-1)$,
one has an exact sequence
\[
0\to H^0(F^1\cH_e)\to H^0(\Omega^1_{\P^1}(\log T)\ot\cH_e)\to
F^2H^1_\dR(S,\cH)\to 0
\]
and $F^2W_2H^1_\dR(S,\cH)$ is generated by
\[
\eta:=\frac{dt}{t(t^2-1)}\left(\frac{t^2dx}{y}-\frac{xdx}{y}\right).
\]
Noticing
\[
\nabla\left(t\frac{dx}{y}\right)=dt\frac{dx}{y}-
\frac{dt}{6t(t^2-1)}\left(\frac{t^2dx}{y}-\frac{xdx}{y}\right)
\]
by \eqref{rec-eq2}, we have 
\[\Theta_\Del(\eta)=6dt\frac{dx}{y}\]
by definition of $\Theta_\Del$.
This shows \eqref{rec-eq11}. Now \eqref{rec-eq1} is immediate from \eqref{rec-eq10} and
\eqref{rec-eq11}.

\medskip

The coefficient ``$\alpha$'' shall be determined in Step 5.
Before this, we show a certain property of $\alpha$.

Let $\delta_t\in H_1(f_2^{-1}(t),\Z)$ be the vanishing cycle at $t=1$, namely
$\delta_t$ is a homology 1-cycle which is a generator of
$\ker[H_1(f_2^{-1}(t),\Z)\to H_1(f_2^{-1}(1),\Z)]\cong \Z$.
Then it defines a Lefschetz thimble $\Delta$ over $[0,1]\subset \P^1(\C)$, and
hence a homology 2-cycle $(1-\sigma_{-1})\Delta\in H_2(X_2^\circ,\Z)$.
Since $C|_{X_2^\circ}$ is a divisor with integral coefficients, one has
$\omega_C|_{X_2^\circ}\in H^2(X^\circ_2,\Z(1))$ and hence
\begin{equation}\label{rec-eq3}
\int_{(1-\sigma_{-1})\Delta}(\omega_C|_U)_\Del
=\int_{(1-\sigma_{-1})\Delta}\omega_C|_{X_2^\circ}\in\Z(1)
\end{equation}
by \eqref{del-eq-1}. 
\begin{lem}\label{rec-lem1}
\[
\int_{\delta_t}\frac{dx}{y}=\frac{2\pi i}{\sqrt{3}}\,{}_2F_1\left(\frac{1}{6},\frac{5}{6},1;1-t^2\right)
\]
\end{lem}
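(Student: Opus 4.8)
The plan is to show that the period $p(t):=\int_{\delta_t}\frac{dx}{y}$, viewed as a function of $t_0=t^2$, is annihilated by the Gauss hypergeometric operator with parameters $(\tfrac16,\tfrac56,1)$ in the variable $z=1-t_0=1-t^2$, and then to pin down the normalizing constant by a residue computation on the nodal fiber over $t=1$.

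First I would derive the Picard--Fuchs equation from the Gauss--Manin connection \eqref{rec-eq2}. Since $\delta_t$ is a flat (locally constant) family of cycles, one has $\frac{d}{dt_0}\int_{\delta_t}\omega=\int_{\delta_t}\nabla_{d/dt_0}\omega$, so with $p=\int_{\delta_t}\frac{dx}{y}$ and $q=\int_{\delta_t}\frac{x\,dx}{y}$ the matrix $A$ yields the first-order system $p'=\frac{1}{6(t_0-t_0^2)}(t_0p-q)$ and $q'=\frac{1}{6(t_0-t_0^2)}(t_0p-t_0q)$. Using the first relation to write $q=t_0p-6(t_0-t_0^2)p'$ and substituting into the second eliminates $q$ and gives the scalar equation $t_0(1-t_0)p''+(1-2t_0)p'-\frac{5}{36}p=0$. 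This is the hypergeometric equation with $c=1$, $a+b=1$, $ab=\frac{5}{36}$, i.e. $\{a,b\}=\{\tfrac16,\tfrac56\}$; and because $a+b=c=1$ the operator is invariant under $t_0\mapsto 1-t_0$, so the same equation holds verbatim in $z=1-t_0=1-t^2$. As a consistency check, the residue of $\nabla$ computed from $A$ at $t_0=1$ is nilpotent (eigenvalues $0,0$), matching the coincident local exponents $0,\,1-c=0$ of $\,{}_2F_1(\tfrac16,\tfrac56,1;z)$ at $z=0$ and the totally degenerate ($I_n$-type) reduction there.

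Next I would identify $p$ with the holomorphic solution. At $z=0$ the two exponents coincide, so up to scaling there is a unique solution holomorphic at $z=0$, namely $\,{}_2F_1(\tfrac16,\tfrac56,1;z)$, the second solution carrying a logarithm. Since $\delta_t$ is the vanishing cycle at $t=1$, it is invariant under the local monodromy $T$; hence $p$ is single-valued and therefore holomorphic near $t=1$ (that is, $z=0$). Consequently $p(t)=\kappa\cdot{}_2F_1(\tfrac16,\tfrac56,1;1-t^2)$ for some constant $\kappa$.

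Finally I would determine $\kappa$ by letting $t\to1$, where $\,{}_2F_1(\cdots;0)=1$, so $\kappa=\lim_{t\to1}p(t)$ is the integral of $\frac{dx}{y}$ over the vanishing cycle on the nodal fiber $y^2=2x^3-3x^2+1=(x-1)^2(2x+1)$. Normalizing this cubic via $y=(x-1)w$ gives $w^2=2x+1$, $x=\frac{w^2-1}{2}$, hence $\frac{dx}{y}=\frac{2\,dw}{w^2-3}$; the node corresponds to $w=\pm\sqrt3$, and the vanishing cycle is a small loop separating the two branches, so $\int_{\delta_1}\frac{dx}{y}=2\pi i\cdot\operatorname{Res}_{w=\sqrt3}\frac{2\,dw}{w^2-3}=\frac{2\pi i}{\sqrt3}$, giving $\kappa=\frac{2\pi i}{\sqrt3}$ as claimed. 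The step I expect to be the main obstacle is this last one: fixing the orientation of the vanishing cycle (and the branch of $\sqrt3$) so as to get the correct sign in $\kappa$, and justifying the interchange of the limit $t\to1$ with the integral as the two branch points collide at the node. By contrast, the derivation of the Picard--Fuchs equation and its matching with the hypergeometric equation are routine once the connection \eqref{rec-eq2} is taken as given.
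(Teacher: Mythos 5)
Your proposal follows the paper's proof essentially verbatim: derive the Picard--Fuchs equation $(t_0-t_0^2)p''+(1-2t_0)p'-\tfrac{5}{36}p=0$ from the connection matrix \eqref{rec-eq2}, use the monodromy invariance of $\delta_t$ at $t_0=1$ to single out the solution holomorphic there, namely a constant times ${}_2F_1(\tfrac16,\tfrac56,1;1-t_0)$, and fix the constant by letting $t\to1$. The only (minor) divergence is in that last step: the paper evaluates $\lim_{t\to1}\int_{\delta_t}\frac{dx}{y}$ as the limit of the elliptic integral $2\int_{\beta_t}^{\gamma_t}dx/\sqrt{2(x-\alpha_t)(x-\beta_t)(x-\gamma_t)}$ between the colliding branch points, whereas you compute the same limit as $2\pi i$ times a residue at a preimage of the node on the normalization of the fiber $y^2=(x-1)^2(2x+1)$ --- both correctly yield $\frac{2\pi i}{\sqrt{3}}$ (up to the orientation ambiguity you flag, which the paper also does not belabor).
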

\begin{proof}
Let $D_{t_0}=\nabla_{\frac{d}{dt_0}}$ be the composition $\cH\to \Omega^1_S\ot\cH\to \cH$ where
the second arrow given by $dt_0\ot v\mapsto v$.
One can derive from \eqref{rec-eq2} that
\[
\left((t_0-t_0^2)D_{t_0}^2+(1-2t_0)D_{t_0}-\frac{5}{36}\right)
\left(\frac{dx}{y}\right)=0.
\]
This implies that $\int_{\delta_t}\frac{dx}{y}$ is a solution of the differential equation
\[
(t_0-t_0^2)\frac{d^2u}{dt_0^2}+(1-2t_0)\frac{du}{dt_0}-\frac{5}{36}u=0.
\]
Therefore $\int_{\delta_t}\frac{dx}{y}$ is a $\C$-linear combination of
\[
{}_2F_1\left(\frac{1}{6},\frac{5}{6},1;1-t_0\right),
\quad{}_2F_1\left(\frac{1}{6},\frac{5}{6},1;t_0\right).
\]
Since $\delta_t$ is invariant by the local monodromy at $t_0=1$,
there is a constant $K\in \C$ such that
\[
\int_{\delta_t}\frac{dx}{y}=K\cdot
{}_2F_1\left(\frac{1}{6},\frac{5}{6},1;1-t_0\right).
\]
One can compute the constant $K$ in the following way.
Let $2x^3-3x^2+t^2=2(x-\alpha_t)(x-\beta_t)(x-\gamma_t)$ where 
$\alpha_t\to-\frac{1}{2}$ and $\beta_t,\gamma_t\to 1$ as $t\to 1$.
Then 
\begin{align*}
K&=\lim_{t\to 1}\int_{\delta_t}\frac{dx}{y}\\
&=\lim_{t\to 1}2\int_{\beta_t}^{\gamma_t}
\frac{dx}{\sqrt{2(x-\alpha_t)(x-\beta_t)(x-\gamma_t)}}\\
&=\lim_{t\to 1}\sqrt{2}i\int_0^{\gamma_t-\beta_t}
\frac{dx}{\sqrt{(x+\beta_t-\alpha_t)x(\gamma_t-\beta_t-x)}}\\
&=\lim_{t\to 1}\sqrt{2}i\int_0^1
\frac{dx}{\sqrt{((\gamma_t-\beta_t)x+\beta_t-\alpha_t)x(1-x)}}\\
&=\sqrt{2}i\int_0^1
\frac{dx}{\sqrt{\frac{3}{2}x(1-x)}}\\
&=\frac{2\pi i}{\sqrt{3}}.
\end{align*}
\end{proof}
Now one computes
\begin{align*}
\mbox{RHS of \eqref{rec-eq3}}&=2\alpha\int_0^1dt\int_{\delta_t}\frac{dx}{y}&\\
&=\frac{4\pi i\alpha}{\sqrt{3}}\int_0^1{}_2F_1\left(\frac{1}{6},\frac{5}{6},1;1-t^2\right)dt
&(\mbox{by Lemma \ref{rec-lem1}})\\
&=\frac{2\pi i\alpha}{\sqrt{3}}\int_0^1t^{-\frac{1}{2}}
{}_2F_1\left(\frac{1}{6},\frac{5}{6},1;1-t\right)dt\\
&=\frac{4\pi i\alpha}{\sqrt{3}}\cdot 
{}_3F_2\left({1,\frac{1}{6},\frac{5}{6}\atop \frac{3}{2},1};1\right)
&(\mbox{by \cite{NIST} 16.5.2})\\
&=\frac{4\pi i\alpha}{\sqrt{3}}\cdot 
{}_2F_1\left({\frac{1}{6},\frac{5}{6}\atop \frac{3}{2}};1\right)\\
&=\frac{4\pi i\alpha}{\sqrt{3}}\frac{\Gamma(\frac{3}{2})\Gamma(\frac{1}{2})}
{\Gamma(\frac{3}{2}-\frac{1}{6})\Gamma(\frac{3}{2}-\frac{5}{6})}
& (\mbox{by \cite{NIST} 15.4.20})\\
&=3\pi i \alpha &(\mbox{by \cite{NIST} 5.5.6}).
\end{align*}
Hence
\begin{equation}\label{rec-eq4}
\alpha\in\frac{2}{3}\Z.
\end{equation}

\medskip

\noindent{\bf Step 4} (Computing RHS of \eqref{ex-eq1}).
Let $\gamma_\xi=c(\xi)\in H_1(f_2^{-1}(1),\Z)$ where 
$c:H^3_{\cM,f_2^{-1}(1)}(X_2,\Z(2))\to H^3_{f_2^{-1}(1)}(X_2,\Z(2))\cong H_1
(f_2^{-1}(1),\Z)$ is the cycle map.
For $0\leq t\leq 1$,
let $\gamma_t\in H_1(f_2^{-1}(t),\Z)$ be the homology cycle such that $\gamma_t|_{t=1}=\gamma_\xi$ and
$\gamma_t|_{t=0}=0$ the vanishing cycle at $t=0$.
The family of $\{\gamma_t\}_t$ defines a Lefschetz thimble $\Gamma_\xi$ over the line segment $[0,1]\subset \P^1(\C)$.
It defines a homology cycle
$\Gamma_\xi\in H_2(X_2^\circ,Z;\Z)$ with boundary 
$\partial\Gamma_\xi=\gamma_\xi=c(\xi)$.
Note that the homology cycle $\gamma_\xi\in H_1(f^{-1}_2(1),\Z)\cong \Z$ is a generator.
The figure of the cycle $\Gamma_\xi$ is as follows, where the orientation of $\gamma_t$
is given by 
either the red arrow or the blue one (we omit to determine the orientation
since it is not necessary in the discussion below).

\begin{center}
{\unitlength 0.1in%
\begin{picture}( 23.0100, 11.3000)(  3.9900,-12.6000)%
%
\special{pn 8}%
\special{ar 900 660 270 530  0.6053366  0.6027865}%
%
{\color{blue}\special{pn 8}%
\special{ar 780 760 260 550  2.8068558  3.5985514}%
%
\special{pn 8}%
\special{pa 544 544}%
\special{pa 550 518}%
\special{fp}%
\special{sh 1}%
\special{pa 550 518}%
\special{pa 516 578}%
\special{pa 538 570}%
\special{pa 554 587}%
\special{pa 550 518}%
\special{fp}%
\special{pa 550 518}%
\special{pa 550 518}%
\special{fp}}%
%
{\color{red}\special{pn 8}%
\special{ar 674 776 260 550  2.8068558  3.5985514}%
%
\special{pn 8}%
\special{pa 426 946}%
\special{pa 430 966}%
\special{fp}%
\special{sh 1}%
\special{pa 430 966}%
\special{pa 437 897}%
\special{pa 420 914}%
\special{pa 397 905}%
\special{pa 430 966}%
\special{fp}}%
%
\special{pn 8}%
\special{pa 910 130}%
\special{pa 2700 640}%
\special{fp}%
\special{pa 915 1200}%
\special{pa 2690 640}%
\special{fp}%
\put(8.1000,-14.0500){\makebox(0,0)[lb]{$t=1$}}%
\put(25.8000,-13.8000){\makebox(0,0)[lb]{$t=0$}}%
%
\special{pn 8}%
\special{pa 2685 705}%
\special{pa 2685 1105}%
\special{dt 0.045}%
\put(12.2500,-6.7000){\makebox(0,0)[lb]{$\gamma_\xi$}}%
%
\special{pn 8}%
\special{ar 1645 650 240 345  5.4044040  0.9307082}%
\put(19.4000,-6.8500){\makebox(0,0)[lb]{$\gamma_t$}}%
\end{picture}}%

\end{center}
\begin{lem}\label{rec-lem2}
\[
\int_{\gamma_t}\frac{dx}{y}=\pm\frac{2\pi }{\sqrt{3}}\,{}_2F_1\left(\frac{1}{6},\frac{5}{6},1;t^2\right)
\]
\end{lem}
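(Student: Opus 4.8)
The plan is to imitate the proof of Lemma \ref{rec-lem1} almost line by line, the only genuinely new input being the evaluation of a normalizing constant at $t=0$ rather than at $t=1$. First I would observe that $\int_{\gamma_t}\frac{dx}{y}$ is a flat section of the very same local system as $\int_{\delta_t}\frac{dx}{y}$, hence it solves the same Picard--Fuchs equation
\[
(t_0-t_0^2)\frac{d^2u}{dt_0^2}+(1-2t_0)\frac{du}{dt_0}-\frac{5}{36}u=0,\qquad t_0=t^2,
\]
derived from \eqref{rec-eq2} in the proof of Lemma \ref{rec-lem1}. Its solution space is spanned by ${}_2F_1\left(\frac16,\frac56,1;t_0\right)$ and ${}_2F_1\left(\frac16,\frac56,1;1-t_0\right)$, so $\int_{\gamma_t}\frac{dx}{y}$ is a $\C$-linear combination of these two.

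Next I would use the defining property $\gamma_t|_{t=0}=0$: since $\gamma_t$ is the vanishing cycle at $t=0$ (equivalently $t_0=0$), it is invariant under the local monodromy there. The point $t_0=0$ is a resonant singular point of the hypergeometric equation ($c=1$), so the monodromy is unipotent and its invariant line is spanned by the solution that is holomorphic at $t_0=0$, namely ${}_2F_1\left(\frac16,\frac56,1;t_0\right)$ (its companion carries a $\log t_0$). Consequently there is a constant $K'\in\C$ with
\[
\int_{\gamma_t}\frac{dx}{y}=K'\cdot{}_2F_1\left(\tfrac16,\tfrac56,1;t^2\right).
\]

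Finally I would pin down $K'=\lim_{t\to 0}\int_{\gamma_t}\frac{dx}{y}$ by an explicit elliptic integral, exactly as in Lemma \ref{rec-lem1} but now around the node at $t=0$. Writing $2x^3-3x^2+t^2=2(x-\alpha_t)(x-\beta_t)(x-\gamma_t)$ with $\beta_t,\gamma_t\to 0$ and $\alpha_t\to\frac32$ as $t\to 1$--- here the two colliding roots are $\beta_t,\gamma_t$ encircled by $\gamma_t$, so
\[
K'=\lim_{t\to 0}2\int_{\beta_t}^{\gamma_t}\frac{dx}{\sqrt{2(x-\alpha_t)(x-\beta_t)(x-\gamma_t)}}.
\]
Substituting $x=\beta_t+(\gamma_t-\beta_t)u$ and letting $t\to 0$ collapses this to $\frac{2}{\sqrt3}\int_0^1\frac{du}{\sqrt{u(1-u)}}=\frac{2\pi}{\sqrt3}$, giving $K'=\pm\frac{2\pi}{\sqrt3}$ with the sign fixed by the (undetermined) orientation of $\gamma_t$. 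The main point to get right --- and the place where this computation diverges from Lemma \ref{rec-lem1} --- is the sign of $x-\alpha_t$ on the interval of integration: since $\alpha_t\to\frac32$ lies to the right of the colliding roots, the product of the three linear factors is \emph{positive} on $(\beta_t,\gamma_t)$, so the integrand is real and no factor of $i$ appears, in contrast to the $\frac{2\pi i}{\sqrt3}$ of Lemma \ref{rec-lem1}. Verifying this reality carefully, together with correctly identifying the monodromy-invariant solution at the resonant point $t_0=0$, is the only delicate step; the rest is the routine limit already carried out in Lemma \ref{rec-lem1}.
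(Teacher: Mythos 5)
Your proof is correct and is exactly the argument the paper intends: the published proof of Lemma \ref{rec-lem2} consists only of the remark ``Similar to the proof of Lemma \ref{rec-lem1}'', and you have supplied precisely those details (same Picard--Fuchs equation, monodromy invariance of the vanishing cycle now at $t_0=0$ picking out the solution holomorphic there, and the limit of the elliptic integral with the sign of $x-\alpha_t$ correctly flipped since $\alpha_t\to\frac32$, which is why no factor of $i$ appears). The only blemish is the typo ``as $t\to 1$'' where you mean ``as $t\to 0$'' in describing the roots $\beta_t,\gamma_t\to 0$, $\alpha_t\to\frac32$.
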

\begin{proof}
Similar to the proof of Lemma \ref{rec-lem1} (details are left to the reader).
\end{proof}
We now have
\begin{align*}
\mbox{RHS of \eqref{ex-eq1}}&=
\alpha\int_{\Gamma_\xi}dt\frac{dx}{y}& \mbox{(by \eqref{rec-eq1})}\\
&=\alpha\int_0^1dt\int_{\gamma_t}\frac{dx}{y}\\
&=\pm\frac{2\pi \alpha}{\sqrt{3}}\int_0^1{}_2F_1\left(\frac{1}{6},\frac{5}{6},1;t^2\right)dt
&\mbox{(by Lemma \ref{rec-lem2})}\\
&=\pm\frac{\pi \alpha}{\sqrt{3}}\int_0^1t^{-\frac{1}{2}}{}_2F_1\left(\frac{1}{6},\frac{5}{6},1;t\right)dt\\
&=\pm\frac{2\pi \alpha}{\sqrt{3}}\,{}_3F_2\left({\frac{1}{6},\frac{5}{6},\frac{1}{2}\atop
1,\frac{3}{2}};1\right)&\mbox{(by \cite{NIST} 16.5.2)}.
\end{align*}

\medskip

\noindent{\bf Step 5}iFinal Step).
We apply Theorem \ref{ex-thm-1} to the results in Step 2 and Step 4, and hence we have
\[
\alpha\cdot
{}_3F_2\left(
{\frac{1}{6},\frac{5}{6},\frac{1}{2}\atop 1,\frac{3}{2}};1\right)
=\pm\frac{3\sqrt{3}}{\pi}\log(2+\sqrt{3})\in \C/\Z(1).
\]
Taking the absolute value of the real part we have
\[
|\mathrm{Re}(\alpha)|\cdot
{}_3F_2\left(
{\frac{1}{6},\frac{5}{6},\frac{1}{2}\atop 1,\frac{3}{2}};1\right)
=\frac{3\sqrt{3}}{\pi}\log(2+\sqrt{3})\in \R,
\]
\[
(\Longrightarrow\quad\mathrm{Re}(\alpha)=\pm 2.0000000 \quad\mbox{by the aid of computer}.)
\]
Since $\alpha\in \frac{2}{3}\Z$ by \eqref{rec-eq4} this yields $|\mathrm{Re}(\alpha)|=|\alpha|=2$.
This completes the proof of \eqref{rec-eq0}.

\bigskip

\begin{center}
{\bf Other Examples}
\end{center}
If $a=\frac{1}{6}$ and $b=\frac{5}{6}$, then \eqref{main-cond} is satisfied if and only if
$q=\frac{1}{2},\frac{i}{3},\frac{j}{4}$ or $\frac{k}{5}$ where $i\in\{1,2\}$,
$j\in\{1,2,3\}$ and $k\in\{1,2,3,4\}$.
In these cases, the explicit log formulas can be obtained 
by applying the same discussion as above to the elliptic fibration $y^2=2x^3-3x^2+t^l$
where $l=2,3,4,5$ respectively.

\medskip

In case $l=3$, the second author obtained in \cite{yabu} 
\[
{}_3F_2\left({\frac{1}{6},\frac{5}{6},\frac{1}{3}\atop1,\frac{4}{3}};1\right)
=\frac{\sqrt{3}\sqrt[3]{2}}{2\pi}A
-\frac{\sqrt[3]{2}}{\pi}B,
\]
\[
{}_3F_2\left({\frac{1}{6},\frac{5}{6},\frac{2}{3}\atop1,\frac{5}{3}};1\right)
=\frac{\sqrt{3}\sqrt[3]{4}}{3\pi}A
+\frac{2\sqrt[3]{4}}{3\pi}B
\]
where 
\[
A:=
\log\left((1-2^{-\frac{2}{3}})^2+(1+2^{-\frac{2}{3}}\sqrt{3})^2\right)
-\log\left((1-2^{-\frac{2}{3}})^2+(1-2^{-\frac{2}{3}}\sqrt{3})^2\right),
\]
\[
B:=\mathrm{Tan}^{-1}\left(\frac{3}{3+\sqrt[3]{2}+3\sqrt[3]{4}}\right).
\]
In case $l=4$ we have
\[
\frac{2\pi}{12^{3/4}}~ {}_3F_2\left({\frac{1}{6},\frac{5}{6},\frac{1}{4}\atop
1,\frac{5}{4}};1\right)=
\frac{1}{2}\log\left(\frac{3^{5/4}-3^{3/4}+\sqrt{2}}{3^{5/4}-3^{3/4}-\sqrt{2}}\right)
-\mathrm{Cos}^{-1}\left(\frac{3^{5/4}+3^{3/4}}{2\sqrt{5+3\sqrt{3}}}\right),
\]
\[
\frac{7\sqrt{3}}{9}~
\frac{2\pi}{12^{3/4}}~ {}_3F_2\left({\frac{1}{6},\frac{5}{6},\frac{3}{4}\atop
1,\frac{7}{4}};1\right)=
\frac{1}{2}\log\left(\frac{3^{5/4}-3^{3/4}+\sqrt{2}}{3^{5/4}-3^{3/4}-\sqrt{2}}\right)
+\mathrm{Cos}^{-1}\left(\frac{3^{5/4}+3^{3/4}}{2\sqrt{5+3\sqrt{3}}}\right).
\]
In case $l=5$, let $\zeta=e^{2\pi i/5}$, 
 $\zeta_{20}=e^{2\pi i/20}$, $\alpha=1/\sqrt[10]{24}>0$ and 
 \[
e_j:=\frac
{\sqrt{2}\alpha^3 \zeta_{20}^{3}\zeta^j+\frac{\sqrt{2}}{4}\alpha^{-3} \zeta_{20}^{-3}\zeta^j-\sqrt{3}(\alpha^2 \zeta_{20}^{2}\zeta^j-1)}
{\sqrt{2}\alpha^3 \zeta_{20}^{3}\zeta^j+\frac{\sqrt{2}}{4}\alpha^{-3} \zeta_{20}^{-3}\zeta^j+\sqrt{3}(\alpha^2 \zeta_{20}^{2}\zeta^j-1)}
\in \C,\quad j\in \Z.
\]
Put
\[
A_k:=\frac{\Gamma(k/5+1/6)\Gamma(k/5+5/6)}{\Gamma(k/5)^2},
\]
\[
f_k:=\frac{2\pi A_k}{k}\cdot{}_3F_2\left({\frac{1}{6},\frac{5}{6},\frac{k}{5}\atop
1,1+\frac{k}{5}};1\right),\quad k=1,2,3,4.
\]
Note $A_k\in \ol\Q$.
Then 
\[
\frac{5}{\zeta^{2k}-1}f_k=(\zeta^{2k}-1)\log e_0+(\zeta^{2k}-\zeta^{3k})\log e_1
+(\zeta^{2k}-\zeta^k)\log e_2+(\zeta^{2k}-\zeta^{4k})\log e_3+4\pi i\zeta^{2k}
\]
for $k=1,2,3,4$ where $\log(x)$ takes the principal values, 
\[
\log(x)=\log|x|+\mathrm{arg}(x)i\ \left(-\pi < \mathrm{arg}(x) \leq \pi \right).
\]

\end{document}